%


\documentclass{SCAEOL}
\usepackage{graphicx,amssymb,bbm,stmaryrd,dsfont,amsfonts,graphics,xcolor,multirow,cite,subfigure,epstopdf}
\usepackage[title]{appendix}

\numberwithin{equation}{section}
\numberwithin{table}{section}
\numberwithin{figure}{section}
\numberwithin{theorem}{section}

\newcommand{\qj}{I}
\newcommand{\xqj}{I_j}
\newcommand{\bpkijm}{[P^k(\xqj)]^m}
\newcommand{\bpkij}{[P^{k-1}(\xqj)]^{m}}
\newcommand{\cbvh}{\boldsymbol{V}_{h}}
\newcommand{\ih}{\mathcal{I}_h}

\newcommand{\thg}{\bar{\theta}}
\newcommand{\ajl}{\alpha^i_{j,\ell}}
\newcommand{\akw}{\boldsymbol{\alpha}^i_k}
\newcommand{\ajk}{\alpha^i_{j,k}}
\newcommand{\aok}{\alpha^i_{1,k}}
\newcommand{\ank}{\alpha^i_{N,k}}

\newcommand{\cajk}{\alpha^i_{j,k}}
\newcommand{\cajok}{\alpha^i_{j-1,k}}

\newcommand{\legendjl}{P_{j,\ell}}
\newcommand{\legendjk}{P_{j,k}}
\newcommand{\legendl}{P_\ell}

\newcommand{\cut}{\bs u_t}
\newcommand{\cuc}{\bs u_j}
\newcommand{\cpth}{\bs {p}^{(\theta)}}
\newcommand{\cpt}{\boldsymbol{p}^\mathrm{T}}

\newcommand{\cpf}{\boldsymbol{p}^-}
\newcommand{\cpz}{\boldsymbol{p}^+}

\newcommand{\cbrt}{\bs R^\mathrm{T}}
\newcommand{\cbbin}{\boldsymbol{B}^{-1}}
\newcommand{\cbahf}{\boldsymbol{A}^{\hf}}
\newcommand{\cbahfi}{\boldsymbol{A}^{-\hf}}

\newcommand{\trs}{\mathrm{T}}
\newcommand{\thkh}{(\theta)}
\newcommand{\thgkh}{(\bar{\theta})}

\newcommand{\mr}{\mathbb R}
\newcommand{\zizf}{z_i^{\pm}}
\newcommand{\zith}{z_i^{(\theta)}}
\newcommand{\zithg}{z_i^{(\bar{\theta})}}

\newcommand{\zo}{z_1}
\newcommand{\zi}{z_i}
\newcommand{\zm}{z_m}

\newcommand{\uo}{u_1}
\newcommand{\ui}{u_i}
\newcommand{\ut}{u_2}
\newcommand{\ao}{a_1}
\newcommand{\ai}{a_i}
\newcommand{\am}{a_m}

\newcommand{\uth}{u_3}
\newcommand{\um}{u_m}
\newcommand{\fo}{f_1}
\newcommand{\fm}{f_m}
\newcommand{\po}{p_1}
\newcommand{\xpm}{p_m}

\newcommand{\crij}{\boldsymbol{r}_{i,j+\hf}}
\newcommand{\clij}{\boldsymbol{\ell}_{i,j+\hf}}
\newcommand{\croj}{\boldsymbol{r}_{1,j+\hf}}
\newcommand{\crmj}{\boldsymbol{r}_{m,j+\hf}}
\newcommand{\cloj}{\boldsymbol{\ell}_{1,j+\hf}}
\newcommand{\clmj}{\boldsymbol{\ell}_{m,j+\hf}}
\newcommand{\cllj}{\boldsymbol{\ell}_{l,j+\hf}}
\newcommand{\crnj}{\boldsymbol{r}_{n,j+\hf}}

\newcommand{\cxi}{\boldsymbol{\xi}}
\newcommand{\cxiu}{\bs\xi_{\bs u}}
\newcommand{\cxiz}{\boldsymbol{\xi_z}}
\newcommand{\cxip}{\boldsymbol{\xi_p}}

\newcommand{\cxiut}{\bs \xi_{\bs u}^{\mathrm{T}}}
\newcommand{\cxiptr}{\cxi_{\bs p}^{\mathrm{T}}}

\newcommand{\cxiuxt}{(\cxiu)_x^\mathrm{T}}
\newcommand{\cet}{\boldsymbol{\eta}}
\newcommand{\cetu}{\bs \eta_{\bs u}}
\newcommand{\cetp}{\boldsymbol{\eta_p}}

\newcommand{\cxiupt}{({\cxiu})_t}

\newcommand{\cetupt}{({\cetu})_t}
\newcommand{\ceu}{\bs e_{\bs u}}
\newcommand{\ceut}{\bs e_{\bs u}^\mathrm{T}}

\newcommand{\cep}{\boldsymbol{e_p}}

\newcommand{\ceupt}{(\bs {e_u})_t}

\newcommand{\wh}{\boldsymbol{w}_h}
\newcommand{\wht}{\boldsymbol{w}_h^\mathrm{T}}
\newcommand{\uh}{\bs {u}_h}

\newcommand{\uhth}{\bs {u}_h^{(\theta)}}
\newcommand{\uhthg}{\bs {u}_h^{(\bar{\theta})}}
\newcommand{\uhzf}{\bs {u}_h^{\pm}}
\newcommand{\vh}{\boldsymbol{v}_h}
\newcommand{\vht}{\boldsymbol{v}_h^\mathrm{T}}

\newcommand{\ph}{\boldsymbol{p}_h}

\newcommand{\phth}{\boldsymbol{p}_h^{(\theta)}}
\newcommand{\phthg}{\boldsymbol{p}_h^{(\bar{\theta})}}

\newcommand{\pd}[1]{\partial{#1}}

\newcommand{\cupv}{\bs u'_{\bs v}}
\newcommand{\cvpu}{\bs v'_{\bs u}}
\newcommand{\cfpu}{\bs f'_{\bs u}}
\newcommand{\cfpv}{\bs f'_{\bs v}}

\newcommand{\vxlt}[1]{{#1}_{j-\hf}^\mathrm{T}}
\newcommand{\vxrt}[1]{#1_{j+\hf}^\mathrm{T}}
\newcommand{\vxl}[1]{#1_{j-\hf}}
\newcommand{\vxr}[1]{#1_{j+\hf}}

\newcommand{\lambij}{\lambda_{i,j+\hf}}

\newcommand{\intj}[1]{\int_{I_j}{#1} {\rm d}x}

\newcommand{\hjo}[2]{\mathcal H_j^{\theta}({#1}, {#2})}
\newcommand{\hjt}[2]{\mathcal H_j^{\bar{\theta}}({#1}, {#2})}
\newcommand{\hjbh}[2]{\mathcal H_j^{\wedge}({#1}, {#2})}

\newcommand{\hqjo}[2]{\mathcal H^{\theta}({#1}, {#2})}
\newcommand{\hqjt}[2]{\mathcal H^{\bar{\theta}}({#1}, {#2})}
\newcommand{\hqjbh}[2]{\mathcal H^{\wedge}({#1}, {#2})}
\newcommand{\intqj}[1]{\int_{I}{#1} {\rm d}x }

\newcommand{\cfh}{\bs {\hat{f}}}

\newcommand{\qh}{\sum\limits_{j=1}^{N}}
\newcommand{\xqh}{\sum\limits_{j=1}^{N-1}}
\newcommand{\qhi}{\sum\limits_{i=1}^{m}}
\newcommand{\qhl}{\sum\limits_{\ell=0}^{k}}
\newcommand{\jump}[1]{\llbracket {#1} \rrbracket}
\newcommand{\ave}[1]{\{\!\!\{{#1}\}\!\!\}}

\newcommand{\jumpxut}{\llbracket {\cxiu} \rrbracket^\mathrm{T}}
\newcommand{\jumpxiz}{\llbracket {\cxi_{\bs z}} \rrbracket}
\newcommand{\jumpxizt}{\llbracket {\cxi_{\bs z}} \rrbracket^\mathrm{T}}
\newcommand{\jumpxu}{\llbracket {\cxiu} \rrbracket}

\newcommand{\pave}{\ave{\bs p}}

\newcommand{\xiuave}{\ave{\cxiu}}
\newcommand{\xizave}{\ave{\cxiz}}

\newcommand{\hf}{{\frac12}}
\newcommand{\hfhk}{\hf}

\newcommand{\ho}{h^{k+1}}

\newcommand{\hot}{h^{2k+2}}
\newcommand{\norminf}[1]{\norm{#1}_\infty}
\newcommand{\normiD}[1]{\norm{#1}_{\infty,D}}

\newcommand{\norm}[1]{\Vert {#1}\Vert}
\newcommand{\norms}[1]{\Vert {#1}\Vert^2}
\newcommand{\normm}[1]{\Vert {#1}\Vert_M}
\newcommand{\normo}[1]{\Vert {#1}\Vert_1}
\newcommand{\normb}[1]{\Vert {#1}\Vert_{\Gamma_{\!h}}}
\newcommand{\dt}{ \frac{\rm d}{{\rm d}t}}

\newcommand{\ind}{\quad~\!}

\newcommand{\bh}[1]{{\color{blue} {#1}}}
\newcommand{\bphz}{\mathsf{P}_h^{\bar{\theta}}}
\newcommand{\bphf}{\mathsf{P}_h^{\theta}}
\newcommand{\pro}{\mathsf{P}}
\newcommand{\bphzt}{\mathbb{P}_h^{\bar{\theta}}}
\newcommand{\bphft}{\mathbb{P}_h^{\theta}}
\newcommand{\prot}{\mathbb{P}}
\newcommand{\bphzD}{\widetilde{\mathsf{P}}_h^{\bar{\theta}}}
\newcommand{\bphfD}{\widetilde{\mathsf{P}}_h^{\theta}}

\newcommand{\cxiuh}{\hat{\cxi}_{\bs u}}
\newcommand{\cxizh}{\hat{\cxi}_{\bs z}}
\newcommand{\cxiuhth}{\cxi^{(\theta)}_{\bs u}}
\newcommand{\cetuh}{\hat{\cet}_{\bs u}}
\newcommand{\cetuhth}{\cet^{(\theta)}_{\bs u}}

\newcommand{\ceuhth}{\bs e^{(\theta)}_{\bs u}}
\newcommand{\cetpthg}{{\cet}^{(\bar{\theta})}_{\bs p}}
\newcommand{\cuhh}{\hat{\bs u}_h}
\newcommand{\cphh}{\hat{\bs p}_h}
\newcommand{\zhi}{\hat{z}_i}

\newcommand{\bs}[1]{{\boldsymbol{#1}}}
\newcommand{\pw}{{\widetilde {\pro}}}
\newcommand{\pb}{{\bar {\pro}}}

\allowdisplaybreaks

\begin{document}

\Year{2022} %
\Month{February}
\Vol{XX} %
\No{1} %
\BeginPage{1} %
\EndPage{XX} %
\AuthorMark{Hongjuan Zhang {\it et al.}}
\ReceivedDay{February 1, 2022}

\title[The LDG method for nonlinear convection-diffusion systems]{Analysis of the local discontinuous Galerkin method with generalized fluxes for 1D nonlinear convection-diffusion systems}


\author[1]{Hongjuan Zhang}{}
\author[1]{Boying Wu}{}
\author[1]{Xiong Meng}{Corresponding author}

\address[{\rm1}]{School of Mathematics, Harbin Institute of Technology, Harbin {\rm 150001}, Heilongjiang, China;}

\Emails{hongjuan.zhang@hit.edu.cn, mathwby@hit.edu.cn, xiongmeng@hit.edu.cn}
\maketitle


{\begin{center}
\parbox{14.5cm}{\begin{abstract}
In this paper, we present optimal error estimates of the local discontinuous Galerkin method with generalized numerical fluxes for one-dimensional nonlinear convection-diffusion systems.  The upwind-biased flux with adjustable numerical viscosity for the convective term is chosen based on the local characteristic decomposition, which is helpful in resolving discontinuities of degenerate parabolic equations without enforcing any limiting procedure. For the diffusive term, a pair of generalized alternating fluxes are considered. By constructing and analyzing generalized Gauss-Radau projections with respect to different convective or diffusive terms, we derive optimal error estimates for nonlinear convection-diffusion systems with the symmetrizable flux Jacobian and fully nonlinear diffusive problems. Numerical experiments including long time simulations, different boundary conditions and degenerate equations with discontinuous initial data are provided to demonstrate the sharpness of theoretical results.\vspace{-3mm}
\end{abstract}}\end{center}}

 \keywords{local discontinuous Galerkin method, nonlinear convection-diffusion systems, generalized numerical fluxes, optimal error estimates, generalized Gauss-Radau projections}

 \MSC{65M12, 65M60}

\renewcommand{\baselinestretch}{1.2}
\begin{center} \renewcommand{\arraystretch}{1.5}
{\begin{tabular}{lp{0.8\textwidth}} \hline \scriptsize
{\bf Citation:}\!\!\!\!&\scriptsize Hongjuan Zhang, Boying Wu, Xiong Meng.  Analysis of the local discontinuous Galerkin method with generalized fluxes for 1D nonlinear convection-diffusion systems.  \vspace{1mm}
\\
\hline
\end{tabular}}\end{center}

\baselineskip 11pt\parindent=10.8pt  \wuhao

\section{Introduction}\label{sc1}
In this paper, we concentrate on optimal error estimates of the local discontinuous Galerkin (LDG) method with generalized numerical fluxes for one-dimensional nonlinear convection-diffusion systems of the form
\begin{subequations}\label{eqfn}
	\begin{align}
	\cut+\bs f(\bs u)_x&=(\bs A(\bs u) \bs u_x)_x, \quad  (x,t)\in \qj\times(0,T],\\
	\bs u(x,0)&=\bs u_0(x), \qquad \quad ~~\! x\in \qj,\label{eqg}
	\end{align}
\end{subequations}
where $\bs u(x,t)=(\uo(x,t),\dots,\um(x,t))^\trs:\mr\times\mr^{+}\rightarrow\mr^{m}$ is the vector-valued solution, $\bs f(\bs u)=(\fo(\bs u),\dots,$ $\fm(\bs u))^\trs:\mr^{m}\rightarrow\mr^{m}$ is a vector-valued flux function, $\bs A(\bs u)=(a_{ij}(\bs u))_{i=1,\dots,m}^{j=1,\dots,m}$ is positive semi-definite, and  $I=(0,2\pi)$. In our analysis, we mainly consider the periodic boundary conditions and \textcolor{red}{Dirichlet boundary conditions}, and the problems with mixed boundary conditions are numerically investigated.
We first show optimal error estimate when the flux Jacobian matrix $\cfpu$  is \textcolor{blue}{symmetric positive definite} in Sect. \ref{sc2}. Extensions to general nonlinear convection-diffusion systems with the symmetrizable flux Jacobian and fully nonlinear diffusive terms are given in Sect. \ref{sc3}.

The LDG method is an extension of the discontinuous Galerkin (DG) finite element methods \cite{CS1990,CS1989,CS19893}, which is suitable to solve partial differential equations involving high order spatial derivatives. It was first introduced by Cockburn and Shu \cite{CS19982} to solve convection-diffusion equations. Later, it was actively applied to solve various high order equations, such as the Schr\"{o}dinger equations \cite{xu2005local,TX2019}, the Navier-Stokes-Korteweg equations \cite{TXK2016} and the KdV type equations \cite{YS2002}. The main idea of the LDG method is to rewrite original high order equation into an equivalent first order system and then the standard DG method can be applied. We refer to papers \cite{Castillo2020,xs2007,BX2018,wang2020,Cheng2021,GX2021} for some incomplete recent development of the LDG methods.

There have been a wide range of numerical methods available for solving convection-diffusion systems. A fast iterative solver for convection-diffusion systems with
spectral elements was given in \cite{LE2011}. Rossi et al. \cite{RBM2012} studied the segmented waves in a reaction-diffusion-convection system. In \cite{ZG1995}, a local $L^2$-error analysis of the streamline diffusion method for nonstationary convection-diffusion systems was developed. In \cite{MN2016}, high-order DG schemes for the first-order hyperbolic advection-diffusion system were proposed. Michoski et al. \cite{MAP2017} extended the classical von Neumann analysis to support generalized nonlinear convection-reaction-diffusion equations discretized via high-order accurate DG methods. The entropy stable spatial-temporal DG method was proposed to solve compressible Navier-Stokes equations in \cite{MS2017}. In light of the sharp discontinuity transition of generalized local Lax-Friedrichs fluxes for nonlinear conservation laws in \cite{Li2020} and the accurate long time wave resolution of downwind-biased fluxes for KdV equations in \cite{Li2020-2}, it would be interesting to investigate the benefit that the generalized flux may bring for solving nonlinear convection-diffusion systems, especially for degenerate equations with discontinuous initial data.

This paper aims to derive optimal error estimates of the LDG method with generalized numerical fluxes for solving nonlinear convection-diffusion systems and to explore the advantages of adjustable numerical viscosities of such fluxes. In particular, due to the difference of the flux Jacobian matrix, we present the optimal error analysis in two different cases, namely the \textcolor{blue}{symmetric positive definite} flux Jacobian and symmetrizable flux Jacobian. For the first case, we apply the local characteristic decomposition \cite{CS19893,MRJ2018,ZS2006} and choose suitable generalized Gauss-Radau (GGR) projections for the leading projection errors for convective and diffusive terms to obtain optimal error estimates. For the second case, since the flux Jacobian matrix is symmetrizable, there are mainly two additional difficulties. One is that the standard GGR projections are no longer valid, due to the complexity of the balance of leading errors between the nonlinear convective term and the diffusive term. Inspired by the work in \cite{CMZ2017}, we construct a new projection for the auxiliary variable to compensate the error of the nonlinear convection term.
Another one is that some new test functions involving a \textcolor{blue}{symmetric positive definite} matrix pertaining to the symmetrizable theory should be chosen, allowing us to obtain a reasonable bound for the boundary term in $\Pi_2$ in Sec. \ref{sc312}. The analysis is also extended to the fully nonlinear convection-diffusion systems. To show flexibility of generalized numerical fluxes with different weights, a variety of numerical examples are provided, which exhibit smaller long time errors for smooth solutions and steeper discontinuity transitions for degenerate equations with discontinuous initial data when compared to the standard upwind fluxes.

The paper is organized as follows. In Sect. \ref{sc2}, we show the LDG scheme with generalized numerical fluxes for one-dimensional nonlinear convection-diffusion systems with \textcolor{blue}{symmetric positive definite} flux Jacobian, display the symmetrizable theory, and present the generalized skew-symmetry property of the DG spatial operator. The optimal error estimates for \textcolor{blue}{symmetric positive definite} flux Jacobian are derived, and the case of Dirichlet boundary conditions is discussed. Extensions of the analysis to the case of symmetrizable flux Jacobian and the fully nonlinear diffusive term are carried out in Sect. \ref{sc3}, in which some new modified GGR projections are constructed and analyzed. In Sect. \ref{sc4}, numerical experiments are presented to verify the theoretical results. Concluding remarks and comments on future work are given in Sect. \ref{sc5}.

\section{Error analysis of the \textcolor{blue}{symmetric positive definite} flux Jacobian}\label{sc2}
To clearly display the main idea of the analysis of the LDG method with generalized fluxes for nonlinear convection-diffusion systems, let us first consider the following system with a nonlinear convective term and linear diffusive term ($\bs A=diag(\ao,\dots,\am),\ai\geq0$ are all constants)
\begin{subequations}\label{eqhs}
\begin{align}
   \cut+\bs f(\bs u)_x&=\bs A\bs u_{xx}, \quad ~~~\! (x,t)\in \qj\times(0,T],\\
	\bs u(x,0)&=\bs u_0(x), \qquad x\in \qj.\label{eqssi}
\end{align}
\end{subequations}
The fully nonlinear case will be discussed in Sect. \ref{sc32}.
\subsection{The LDG scheme}\label{sc21}
\subsubsection{Notation}
Let $\ih=\{\xqj=(\vxl{x},\vxr{x})\}_{j=1}^N$ be a partition of $I=(0,2\pi)$. The length of each element is $h_j=\vxr{x}-\vxl{x}$. The maximum element length is denoted by $h=\displaystyle\max_{1\leq j\leq N}h_j$. We assume the partition is quasi-uniform, namely, there exists a positive constant $\gamma$ such that $h_j\geq\gamma h$ for any $j$, as $h$ goes to zero. The discontinuous finite element space is defined as
\begin{equation*}
\cbvh = \{ \bs v\in [L^2(I)]^m: \bs v|_{\xqj} \in 	\bpkijm, ~j=1,\dots,N\},
\end{equation*}
where $P^k(\xqj)$ denotes the space of polynomials of degree at most $k$ on $\xqj$.

Since functions in $\cbvh$ may be discontinuous across cell interfaces, denote the jump and the average at $\vxr{x}$ by
\begin{equation*}
\vxr{\jump{\bs p}}=\vxr{\cpz}-\vxr{\cpf}, \quad\vxr{\pave}=\hf(\vxr{\cpz}+\vxr{\cpf}),
\end{equation*}
where $\bs p_{j+\hf}^\mp$ are values from the left and right elements, respectively.
Furthermore, we define the weighted average as
\begin{equation*}
\vxr{\cpth}=\theta\vxr{\cpf}+\bar{\theta}\vxr{\cpz},\quad \vxr{\bs f(\bs p)^{\thkh}}=\theta\bs f(\vxr{\cpf})+\bar{\theta}\bs f(\vxr{\cpz}),\quad \bar{\theta}=1-\theta.
\end{equation*}
As usual, we use $\normm{\cdot}$ to represent the length of a vector, or the spectral norm of a matrix. For any vector $\bs p=(\po,\cdots,\xpm)^\trs$, $\normm{\bs p}=(\sum_{i=1}^{m}p_i^2)^{1/2}$, and for any matrix $\bs C$, $\normm{\bs C}=\displaystyle\max_{\normm{\bs p}=1}\normm{\bs C\bs p}$.
Let $W^{\ell,p}(D)$ be the classical Sobolev space equipped with norm $\norm{\cdot}_{\ell,p,D}$. For any vector-valued function $\bs v = (v_1, \ldots, v_m)^\trs$, the $L^2$ norm is denoted by $\norms{\bs v}_D=\int_D{\|\bs v\|_M^2}{\rm d}x$ and the $L^\infty$ norm is denoted by
$\normiD{\bs v} =\displaystyle\max_{x\in D}\|\bs v(x)\|_M$. The subscripts $D, p, \ell$ will be omitted when $D=I, ~p=2$ and $\ell=0$. The \emph{broken} Sobolev space $W^{\ell,p}(\ih)$ and the corresponding norms can be defined in an analogous way. We use $\normb{\bs v}=\qh(\|\bs v^-_{j+\hf}\|_M^2+\|\bs v_{j-\hf}^+\|_M^2)^\hf$ to denote the $L^2$ norm at cell boundaries.

\subsubsection{The Cauchy-Schwarz inequality and inverse inequalities}
For any vector-valued functions $\bs p$, $\bs q$ and matrix-valued function $\bs C$, the following Cauchy-Schwarz inequality holds,
\begin{equation}\label{CSine}
|\cpt\bs C\bs q|\leq\normm{\bs C}\normm{\bs p}\normm{\bs q}.
\end{equation}
In what follows, we list some inverse inequalities of the finite element space \cite{BS1994}. For any $\bs p\in\cbvh$, there exists a positive constant $C$ independent of $\bs p$ and $h$, such that
\begin{align*}
&(\mathbf{\romannumeral1})~\norm{\partial_x\bs p}\leq Ch^{-1}\norm{\bs p},\\
&(\mathbf{\romannumeral2})~\normb{\bs p}\leq Ch^{-\hf}\norm{\bs p},\\
&(\mathbf{\romannumeral3})~\norminf{\bs p}\leq Ch^{-\hf}\norm{\bs p}.
\end{align*}

\subsubsection{The symmetrization technique}\label{sc213}
To deal with the symmetrizable flux Jacobian in Sect. \ref{sc3}, the following symmetrization procedure is essential. By the symmetrizable theory, e.g. \cite{Luo15}, if the flux Jacobian matrix $\cfpu$ is symmetrizable, we can find a mapping $\bs u(\bs v):\mr^m \rightarrow\mr^m$ such that the Jacobian matrix $ \cupv=(\pd\ui/\pd v_j)_{i=1,\ldots,m}^{j=1,\ldots,m}$ is \textcolor{blue}{symmetric positive definite} and the Jacobian matrix $\cfpv=\cfpu\cupv$ is symmetric. We also have the transformation $\bs v=\bs v(\bs u)$ with $\cvpu=(\pd v_i/\pd u_j)_{i=1,\ldots,m}^{j=1,\ldots,m}$ being \textcolor{blue}{symmetric positive definite}. If we let $\bs Q\equiv \bs Q(\bs u)=\cvpu$, from the symmetrizable theory, we know that $\cfpu$ has a strong relationship with an important symmetric matrix $\bs K$, i.e.,
\begin{equation*}
\bs Q^{1/2}\cfpu\bs Q^{-1/2}=\bs Q^{1/2}\cfpv\bs Q^{1/2}\triangleq \bs K.
\end{equation*}
Note that the symmetric matrix $\bs K$ has the same spectrum with $\cfpu$. Thus, $\bs K$ can be decomposed to $\bs K=\bs X^{-1}\tilde{\bs K}\bs X$ with $\tilde{\bs K}=diag\left\{{\lambda_i}\right\}_{i=1}^m$ and $\lambda_i$ $(i=1, \ldots, m)$ being the eigenvalues of $\cfpu$. By the above relationship, we can further obtain
\begin{equation}\label{qf}
\bs Q\cfpu=\bs Q^{1/2}\bs K \bs Q^{1/2}.
\end{equation}
The above identity is useful for us to obtain a definite viscosity term for the estimate of $\Pi_2$ in Sec. \ref{sc312}.

\subsubsection{The LDG scheme}
At first, we introduce an auxiliary variable $\bs p$ to rewrite \eqref{eqhs} as
\begin{equation*}
\cut+\bs f(\bs u)_x=\cbahf\bs p_x,\quad \bs p=\cbahf\bs u_x,
\end{equation*}
where $\cbahf=diag(\ao^\hf,\dots,\am^\hf)$. The semi-discrete LDG scheme of \eqref{eqhs} is: $\forall t\in(0,T]$, find $(\uh,\ph)\in\cbvh\times\cbvh$ such that
\begin{subequations}
	\begin{align}
	\intj{\vht(\uh)_t}&=\hjbh{\bs f(\uh)}{\vh}-\hjt{\cbahf\ph}{\vh},\\
	\intj{\wht\ph}&=-\hjo{\cbahf\uh}{\wh},
	\end{align}
\end{subequations}
hold for any $(\vh,\wh)\in\cbvh\times\cbvh$ and $j=1,\dots,N$, where the DG spatial operators $\mathcal H_j^\theta$ and $\mathcal H_j^\wedge$ depending on the choice of numerical fluxes (specified later) are
\begin{align*}\label{dgdiso}
\hjo{\cbahf\bs p}{\bs q}=&\intj{\bs q_x^\mathrm{T}\cbahf\bs p}-\vxrt{(\bs q^-)} \cbahf\vxr{\cpth}+\vxlt{(\bs q^+)}\cbahf\vxl{\cpth},\\
\hjbh{\bs p}{\bs q}=&\intj{\bs q_x^\mathrm{T}\bs p}-\vxrt{(\bs q^-)} \vxr{\hat{\bs p}}+\vxlt{(\bs q^+)}\vxl{\hat{\bs p}},
\end{align*}
and $\hqjo{\cbahf\bs p}{\bs q}=\qh\hjo{\cbahf\bs p}{\bs q}$, $\hqjbh{\bs p}{\bs q}=\qh\hjbh{\bs p}{\bs q}$. Using an argument similar to that in \cite{CMZ2017}, we have the following generalized skew-symmetry property of the DG spatial operator $\mathcal H_j^\theta$.
\begin{lemma}\label{lmm-dgvr-5}
Under the periodic boundary conditions, one has
\begin{equation}\label{dgopp}
\hqjo{\cbahf\bs p}{\bs q}+\hqjt{\cbahf\bs q}{\bs p}=0.
\end{equation}	
\end{lemma}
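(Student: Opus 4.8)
The plan is to expand both operators by their definitions, merge the two volume integrals via integration by parts, and then verify that the resulting interface contributions cancel at every face. First I would write
\begin{align*}
\hqjo{\cbahf\bs p}{\bs q}+\hqjt{\cbahf\bs q}{\bs p}
=\qh\Big(\intj{\bs q_x^\trs\cbahf\bs p}+\intj{\bs p_x^\trs\cbahf\bs q}\Big)+\mathcal{B},
\end{align*}
where $\mathcal{B}$ collects all the interface terms. Since $\cbahf=diag(\ao^\hf,\dots,\am^\hf)$ is constant and symmetric, the scalar $\bs q_x^\trs\cbahf\bs p$ equals its own transpose $\bs p^\trs\cbahf\bs q_x$, so the two integrands combine into $\frac{{\rm d}}{{\rm d}x}(\bs p^\trs\cbahf\bs q)$. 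By the fundamental theorem of calculus the cellwise volume term then equals the difference of boundary traces $\vxrt{(\bs p^-)}\cbahf\vxr{(\bs q^-)}-\vxlt{(\bs p^+)}\cbahf\vxl{(\bs q^+)}$ on each $\xqj$.

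Next I would gather every quantity at a common interface $\vxr x$. After reindexing the $\vxl x$-terms (shifting $j\mapsto j+1$), the periodic boundary conditions identify the face at $j=N$ with the face at $j=0$, so that no genuine domain-boundary contribution survives. The boundary pieces of $\hqjo{\cbahf\bs p}{\bs q}$ then collapse to $\jump{\bs q}^\trs\cbahf\,\bs p^{\thkh}$ and those of $\hqjt{\cbahf\bs q}{\bs p}$ to $\jump{\bs p}^\trs\cbahf\,\bs q^{\thgkh}$, using $\jump{\bs q}=\bs q^+-\bs q^-$. Writing the local shorthand $\langle\bs a,\bs b\rangle:=\bs a^\trs\cbahf\bs b$, which is a symmetric form because $\cbahf$ is symmetric, the full contribution at a generic face becomes
\begin{align*}
\langle\bs p^-,\bs q^-\rangle-\langle\bs p^+,\bs q^+\rangle+\jump{\bs q}^\trs\cbahf\,\bs p^{\thkh}+\jump{\bs p}^\trs\cbahf\,\bs q^{\thgkh}.
\end{align*}

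Finally I would substitute the weighted averages $\bs p^{\thkh}=\theta\bs p^-+\thg\bs p^+$ and $\bs q^{\thgkh}=\thg\bs q^-+\theta\bs q^+$ and group the four quadratic types. Using $\langle\bs a,\bs b\rangle=\langle\bs b,\bs a\rangle$ together with $\theta+\thg=1$, the coefficient of $\langle\bs p^-,\bs q^-\rangle$ is $1-\theta-\thg=0$, that of $\langle\bs p^+,\bs q^+\rangle$ is $-1+\thg+\theta=0$, while the cross term $\langle\bs p^-,\bs q^+\rangle$ carries $+\theta$ from $\mathcal{H}^\theta$ and $-\theta$ from $\mathcal{H}^{\bar\theta}$, and $\langle\bs p^+,\bs q^-\rangle$ carries $-\thg$ and $+\thg$; both cancel. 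Hence each face contribution vanishes identically, and summing over $j$ yields \eqref{dgopp}.

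The main obstacle I anticipate is the interface bookkeeping in the second step: correctly pairing the $\theta$-weighted trace of $\bs p$ produced by $\mathcal{H}^\theta$ with the $\thg$-weighted trace of $\bs q$ produced by $\mathcal{H}^{\bar\theta}$ at the same face, and invoking periodicity to eliminate the would-be endpoint terms. Once everything is organized at a common interface, the cancellation is an elementary consequence of the symmetry of $\cbahf$ and the identity $\theta+\thg=1$.
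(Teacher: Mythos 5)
Your proof is correct and is essentially the argument the paper itself invokes: the paper gives no written proof but appeals to the scalar skew-symmetry argument of Cheng--Meng--Zhang (reference \cite{CMZ2017}), which is exactly your computation of cellwise integration by parts, regrouping of traces at each interface under periodicity, and cancellation via the symmetry of the constant matrix $\cbahf$ together with $\theta+\thg=1$. Your write-up simply supplies, correctly, the details (extended to the vector case with diagonal $\cbahf$) that the paper leaves to the citation.
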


Following \cite{MRJ2018}, we consider the Jacobian matrix $\cfpu(\vxr{\bs u})$ in the definition of the upwind-biased flux $\cfh$. The corresponding eigenvalues, left and right eigenvectors are denoted by $\textcolor{blue}{\lambij,\clij,\crij}$ $(i=1,\dots,m)$, normalized so that $\textcolor{blue}{\cllj \crnj=\delta_{l,n}}$, and $\textcolor{blue}{\vxr{\bs R}=[\croj,\dots,\crmj],}$ $\textcolor{blue}{\vxr{\bs L}=[\cloj,\dots,}$ $\textcolor{blue}{\clmj]^\trs.}$ The upwind-biased flux is defined by the following procedure.

\begin{enumerate}
  \item Transform $\bs f(\uhzf)$ to the eigenspace of  $\cfpu(\vxr{\bs u})$,
\begin{equation*}
\zizf=\textcolor{blue}{\clij}\bs f(\bs u_h^{\pm}), \quad i=1,\dots,m.
\end{equation*}
  \item Apply the scalar upwind-biased setting to $\zizf$ in the $i$th characteristic field $(i=1,\dots,m)$, and the numerical flux $\zhi$ depends on the sign of $\textcolor{blue}{\lambij}$,
\begin{equation*}
\zhi=
\begin{cases}
\zith&\text{if $\textcolor{blue}{\lambij}\geq 0$},\\
\zithg&\text{if $\textcolor{blue}{\lambij} <  0$}.
\end{cases}
\end{equation*}
  \item Transform back to the physical field to get $\vxr{\cfh}$,
\begin{equation}\label{flx-s}
\vxr{\cfh}=\qhi\zhi\textcolor{blue}{\crij.}
\end{equation}
\end{enumerate}
In particular, when the flux Jacobian $\cfpu$ is \textcolor{blue}{symmetric positive definite},   eigenvalues of $\cfpu(\vxr{\bs u})$ are all positive. It follows from the above procedure that
\begin{subequations}
\begin{equation}\label{fluxc}
\vxr{\cfh}=\vxr{\bs f(\uh)}^{(\theta)}.
\end{equation}
For diffusive terms, the following generalized alternating numerical fluxes are chosen
\begin{equation}\label{flux1}
	\cuhh=\uhth,\quad \cphh=\phthg,
\end{equation}
or
\begin{equation}\label{flux2}
	\cuhh=\uhthg,\quad \cphh=\phth
\end{equation}
\end{subequations}
where $\theta > \frac12$ and we have omitted the subscript $j+\hf$.
The numerical initial condition is chosen as $\uh(0)=\bs \pi_h \bs u_0$, where $\bs \pi_h = (\pi_h, \ldots, \pi_h)^\trs$ is the standard  $L^2$ projection in the vector form.

\subsection{Optimal error estimates of the \textcolor{blue}{symmetric positive definite} flux Jacobian}\label{sc22}
\subsubsection{Preliminaries}
To define projection for systems, let us first recall scalar GGR projections \cite{Liu15,CMZ2017}. For $z\in H^1(\ih)$ and an arbitrary cell $\xqj$, projections \textcolor{blue}{$\bphft$} and \textcolor{blue}{$\bphzt$} are respectively defined as for any $\textcolor{blue}{j=1,\dots,N}$
\begin{subequations}\label{pro-s}
\begin{align}
\intj{(\textcolor{blue}{\bphft z})v}&=\intj{zv},\quad \forall v\in P^{k-1}(\xqj),\quad\vxr{(\textcolor{blue}{\bphft z})}^{\thkh}=\vxr{z}^{\thkh},\label{pro-sa}\\
\intj{(\textcolor{blue}{\bphzt z})v}&=\intj{zv},\quad \forall v\in P^{k-1}(\xqj),\quad\vxl{(\textcolor{blue}{\bphzt z})}^{\thgkh}=\vxl{z}^{\thgkh}. \label{pro-s2}
\end{align}
\end{subequations}

Next, consider the local linearization of the Jacobian matrix \textcolor{red}{$\cfpu(\vxr{\bs u})$ again.}
Its eigenvalues, left and right eigenvectors are \textcolor{red}{$\lambij,$ $\clij,$ $\crij$ $(i=1,\dots,m),$ and $\cllj \crnj=\delta_{l,n}$ respectively. Besides, }\textcolor{red}{$\bs R_{j+\hf}=[\croj,\dots,\crmj]$, $\bs \Lambda_{j+\hf}=diag(\lambij)_{i=1}^m$, and $\bs R_{j+\hf}^{-1}=\bs L_{j+\hf}=[\cloj,\dots,\clmj]^\trs$. Clearly, $\cfpu(\vxr{\bs u})\bs R_{j+\hf}=\bs R_{j+\hf}\bs \Lambda_{j+\hf}$.} Then, the projection of a vector $\bs u$, denoted by $\pro\bs u$ in $\xqj$, is the unique function in $\bpkijm$
determined by the following procedure.

\begin{enumerate}
  \item Transform $\bs u$ to the eigenspace of \textcolor{red}{$\cfpu(\vxr{\bs u})$},
\begin{equation*}
\zi=\textcolor{red}{\clij}\bs u, \quad i=1,\dots,m.
\end{equation*}
  \item Apply the scalar GGR projection \eqref{pro-s} to $\zi$ for the $i$th characteristic variable $(i=1,\dots,m)$, and the projection \textcolor{blue}{$\prot\zi$} depends on the sign of \textcolor{red}{$\lambij$}, i.e.,
\begin{equation*}
\textcolor{blue}{\prot\zi}=
\begin{cases}
\textcolor{blue}{\bphft\zi}&\text{if $\textcolor{red}{\lambij}\geq 0$},\\
\textcolor{blue}{\bphzt\zi}&\text{if $\textcolor{red}{\lambij}< 0$}.
\end{cases}
\end{equation*}
  \item Transform back to the physical field to get $\pro\bs u$,
\begin{equation*}
\pro\bs u=\qhi \textcolor{blue}{\prot\zi}\textcolor{red}{\crij}.
\end{equation*}
\end{enumerate}
According to the above definition, we have $\bs u=\textcolor{red}{\bs R_r}\bs z$ and $\pro\bs u=\textcolor{red}{\bs R_r} \prot \bs z$ with \textcolor{red}{$\bs R_r=\bs R_{j+\hf}$}, where $\bs z=(\zo,\dots,\zm)^\trs$ is the characteristic variable. When the matrix $\cfpu$ is \textcolor{blue}{symmetric positive definite}, we have \bh{$\prot \bs z =(\bphft\zo,\dots,\bphft\zm)^\trs$} and $\pro\bs u \triangleq \bphf\bs u$. Note that \textcolor{red}{$\bs R_r=\bs R_{j+\hf}$} is a constant matrix in each element $\xqj$, by the definition and approximation property of scalar GGR projection \textcolor{blue}{$\bphft$} in \eqref{pro-sa}, we conclude that
\begin{subequations}\label{op-pro}
\begin{equation}
(\bs u-\pro\bs u,\vh)_j=0,\quad \forall\vh\in\bpkij,\quad \vxr{(\bs u-\pro\bs u)}^{\thkh}=\bs 0,
\end{equation}
and
\begin{equation}\label{pru}
\norm{\bs u-\pro\bs u}+h\norm{(\bs u-\pro\bs u)_x}+h^\hf\normb{\bs u-\pro\bs u}\leq C\ho.
\end{equation}
We choose the projection $\bphz\bs p=(\textcolor{blue}{\bphzt \po},\dots,\textcolor{blue}{\bphzt \xpm})^\trs$ for the auxiliary variable $\bs p$. Analogously,
\begin{equation}
(\bs p-\bphz\bs p,\vh)_j=0,\quad \forall\vh\in\bpkij,\quad \vxl{(\bs p-\bphz\bs p)}^{\thgkh}=\bs 0,
\end{equation}
and
\begin{equation}\label{prp}
\norm{\bs p-\bphz\bs p}+h\norm{(\bs p-\bphz\bs p)_x}+h^\hf\normb{\bs p-\bphz\bs p} \leq C\ho.
\end{equation}
\end{subequations}

By Galerkin orthogonality, for all $(\vh,\wh) \in \cbvh\times\cbvh$, we have the error equations
	\begin{align*}
	\intqj{\vht\ceupt}&=\hqjo{\bs f(\bs u)-\bs f(\uh)}{\vh}-\hqjt{\cbahf\cep}{\vh},\\
	\intqj{\wht\cep}&=-\hqjo{\cbahf\ceu}{\wh},
	\end{align*}
which, by the usual decomposition that $\ceu=(\bs u-\bphf\bs u) + (\bphf\bs u-\uh) \triangleq \cetu + \cxiu$ and $\cep=(\bs p-\bphz\bs p) + (\bphz\bs p-\ph) \triangleq \cetp + \cxip$, are
\begin{subequations}\label{erreq2}
	\begin{align}
	\intqj{\vht\cxiupt}&=-\intqj{\vht\cetupt}+\hqjo{\bs f(\bs u)-\bs f(\uh)}{\vh}-\hqjt{\cbahf\cxip}{\vh}\notag\\
	&\ind-\hqjt{\cbahf\cetp}{\vh},\\
	\intqj{\wht\cxip}&=-\intqj{\wht\cetp}-\hqjo{\cbahf\cxiu}{\wh}-\hqjo{\cbahf\cetu}{\wh}.
	\end{align}
\end{subequations}
To deal with the nonlinearity of nonlinear convective and diffusive terms, we make an \emph{a priori} assumption that
\begin{subequations}
\begin{equation}\label{pr1}
\norm{\cxiu} \le h^{\frac{3}{2}}, ~k\geq 1.
\end{equation}
 By the inverse property, one has
\begin{equation}\label{aprioroass}
\norminf{\cxiu}\leq Ch, \quad\norminf{\ceu}\leq Ch.
\end{equation}
\end{subequations}
Remark that the \emph{a priori} assumption can be verified using the technique in \cite{ZS2006}, and details are omitted.

\textcolor{red}{\begin{remark}\label{rem-defpro-1}
It is worth noting that the above GGR projection of $\bs u$ is defined based on the characteristic decomposition procedure; see, e.g., \cite[Sect. 5.6]{Luo15} in which the local Gauss-Radau projection is constructed for the purely upwind flux. Alternatively, since the Jacobian matrix is symmetric positive definite, one can also use the GGR projection for componentwise, and the optimal error estimates still hold. 	
\end{remark}}

\subsubsection{The optimal error estimate}
\begin{theorem}\label{thm-oeep-3}
Assume that the exact solution $\bs u$ of \eqref{eqhs} is sufficiently smooth, e.g., $\norm{\bs u}_{k+2}(t)$ and $\norm{\cut}_{k+1}(t)$ are bounded uniformly for any $t\in (0,T]$. Let $\uh$ be the LDG solution with the numerical fluxes \eqref{fluxc}, \eqref{flux1}. For a quasi-uniform mesh and $k \ge 1$, we have, for any $T>0$, the following error estimates
\begin{equation}
	\norm{\bs u(T)-\uh(T)} \leq C\ho,
\end{equation}
where the positive constant $C$ depends on $\norm{\bs u}_{k+2}$, $\norm{\cut}_{k+1}$, $\bs f$, but is independent of $h$.
\end{theorem}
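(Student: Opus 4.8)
The plan is to run the standard energy argument on the error equations \eqref{erreq2}, exploiting the generalized skew-symmetry of Lemma \ref{lmm-dgvr-5} together with the orthogonality and weighted-interface properties of the GGR projections collected in \eqref{op-pro}. Concretely, I would choose $\vh=\cxiu$ in the first equation and $\wh=\cxip$ in the second and add them. The left-hand side then becomes $\hf\dt\norms{\cxiu}+\norms{\cxip}$, while the two diffusive cross terms $-\hqjt{\cbahf\cxip}{\cxiu}$ and $-\hqjo{\cbahf\cxiu}{\cxip}$ cancel exactly by \eqref{dgopp}. This cancellation is precisely what forces the pairing of the $\theta$-flux for $\uh$ with the $\bar\theta$-flux for $\ph$, and correspondingly the pairing of the projections $\bphf$ for $\bs u$ and $\bphz$ for $\bs p$.

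I would next discard the terms that the projections annihilate. Since $\cbahf$ is a constant diagonal matrix, both $\cbahf(\cxiu)_x$ and $\cbahf(\cxip)_x$ lie in $\bpkij$, so the volume parts of $\hqjt{\cbahf\cetp}{\cxiu}$ and $\hqjo{\cbahf\cetu}{\cxip}$ vanish by the $L^2$ orthogonality in \eqref{op-pro}; their interface parts vanish because $\bphf$ makes $\cetu^{(\theta)}$ zero and $\bphz$ makes $\cetp^{(\bar\theta)}$ zero at every node, so both terms are identically zero. The two remaining ``easy'' contributions $-\intqj{\cxiut\cetupt}$ and $-\intqj{\cxiptr\cetp}$ are handled by Cauchy--Schwarz, the approximation bounds \eqref{pru}--\eqref{prp} applied to $\bs u$ and $\cut$, and Young's inequality, producing a bound of the form $C\norms{\cxiu}+\hf\norms{\cxip}+C\hot$, the half copy of $\norms{\cxip}$ being absorbed on the left.

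The crux of the argument, and the step I expect to be hardest, is the nonlinear convective term $\hqjo{\bs f(\bs u)-\bs f(\uh)}{\cxiu}$. Writing $\bs f(\bs u)-\bs f(\uh)=\bar{\bs A}\,\ceu$ with the averaged Jacobian $\bar{\bs A}=\int_0^1\cfpu(\uh+s\,\ceu)\,{\rm d}s$, I would freeze the Jacobian at the interface value $\bs C_j:=\cfpu(\vxr{\bs u})$---which is symmetric positive definite, \emph{constant} on $\xqj$, and is exactly the matrix used to build $\bphf$---and split $\bar{\bs A}=\bs C_j+(\bar{\bs A}-\bs C_j)$. Decomposing $\ceu=\cetu+\cxiu$, the frozen piece $\hqjo{\bs C_j\cetu}{\cxiu}$ vanishes identically: the symmetry and elementwise constancy of $\bs C_j$ keep $\bs C_j(\cxiu)_x$ in $\bpkij$ (killing the volume part) and give $\bs C_j\cetu^{(\theta)}=\bs 0$ at the nodes (killing the interface part). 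The frozen piece $\hqjo{\bs C_j\cxiu}{\cxiu}$ yields, after elementwise integration by parts, the non-positive upwind numerical dissipation $-(\theta-\hf)\qh\jump{\cxiu}^\trs\bs C_j\jump{\cxiu}\le0$, up to interface mismatches governed by $\normm{\bs C_j-\bs C_{j-1}}=O(h)$, which are absorbed into $C\norms{\cxiu}$ through the inverse inequality for $\normb{\cxiu}$. Finally the genuinely nonlinear remainder $\hqjo{(\bar{\bs A}-\bs C_j)\ceu}{\cxiu}$ carries the small factor $\normm{\bar{\bs A}-\bs C_j}\le Ch$ coming from the smoothness of $\bs u$ and the \emph{a priori} bound \eqref{aprioroass}; this factor exactly compensates the single negative power of $h$ lost to the inverse inequalities (i)--(ii) when estimating $(\cxiu)_x$ and $\normb{\cxiu}$, so the whole remainder is bounded by $C\norms{\cxiu}+C\hot$.

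Collecting everything gives the differential inequality $\dt\norms{\cxiu}+\norms{\cxip}\le C\norms{\cxiu}+C\hot$. Gronwall's lemma then yields $\norms{\cxiu}(T)\le e^{CT}\big(\norms{\cxiu}(0)+C\hot\big)$; since the initial choice $\uh(0)=\bs\pi_h\bs u_0$ gives $\norm{\cxiu}(0)\le C\ho$ by standard $L^2$- and GGR-approximation estimates, we obtain $\norm{\cxiu}(T)\le C\ho$, and a triangle inequality with \eqref{pru} converts this into the desired $\norm{\bs u(T)-\uh(T)}\le C\ho$. It only remains to discharge the \emph{a priori} assumption \eqref{pr1}: because $k\ge1$ makes $C\ho\le h^{3/2}$ for $h$ small, a standard continuation (bootstrap) argument, as in \cite{ZS2006}, closes the loop.
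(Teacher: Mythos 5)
Your proposal is correct, and its overall skeleton coincides with the paper's proof: the same test functions $\vh=\cxiu$, $\wh=\cxip$, the exact cancellation of the diffusive cross terms via Lemma \ref{lmm-dgvr-5}, the vanishing of all diffusive projection terms by \eqref{op-pro} (this is precisely the paper's $\Lambda_5=0$), the Cauchy--Schwarz/Young treatment of the $\cetupt$ and $\cetp$ terms (the paper's $\Lambda_1$), and the Gronwall conclusion with the bootstrap deferred to \cite{ZS2006}. Where you genuinely diverge is the nonlinear convective term. The paper uses the second-order Taylor expansion $\bs f(\bs u)-\bs f(\uh)=\cfpu(\bs u)\cxiu+\cfpu(\bs u)\cetu-\ceut\bs H\ceu$ about the exact solution and treats the three pieces separately: $\Lambda_2$ is integrated by parts with the \emph{continuous} variable coefficient $\cfpu(\bs u)$, so the dissipation $-(\theta-\hf)\sum_j\jump{\cxiu}^\trs\cfpu(\bs u)\jump{\cxiu}\le 0$ appears with no interface mismatch, at the price of a volume term in $\partial_x(\cfpu(\bs u))$; $\Lambda_3$ is handled by freezing the Jacobian at the cell value $\cuc$ to exploit orthogonality; and the quadratic Hessian remainder $\Lambda_4$ is controlled by the \emph{a priori} bound \eqref{aprioroass}. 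You instead write $\bs f(\bs u)-\bs f(\uh)=\bar{\bs A}\,\ceu$ with the averaged Jacobian and freeze at the interface value $\bs C_j=\cfpu(\vxr{\bs u})$ cell by cell, trading the $\partial_x(\cfpu(\bs u))$ term and the separate Hessian term for $O(h)$ interface-mismatch corrections absorbed by inverse inequalities; the \emph{a priori} assumption enters through $\normm{\bar{\bs A}-\bs C_j}\le Ch$. Both routes are sound, need the same hypotheses, and yield the same differential inequality.

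One small inaccuracy worth fixing: your claim that the frozen piece $\hqjo{\bs C_j\cetu}{\cxiu}$ ``vanishes identically'' is not quite right. Since the frozen matrix changes from cell to cell, the numerical flux at $x_{j+\hf}$ is $\theta\bs C_j\cetu^-+\bar{\theta}\bs C_{j+1}\cetu^+$ rather than $\bs C_j\cetu^{(\theta)}$; the volume part and the $\bs C_j\cetu^{(\theta)}$ portion of the flux do vanish, but a residual $\bar{\theta}(\bs C_{j+1}-\bs C_j)\cetu^+$ of size $O(h)\cdot O(h^{k+1/2})$ survives at each node. This is harmless---it is absorbed by exactly the mismatch mechanism you already invoke for the $\hqjo{\bs C_j\cxiu}{\cxiu}$ piece, contributing only $C\norm{\cxiu}\ho$ after summing---but it should be estimated rather than declared zero.
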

\begin{proof}
Taking $\vh=\cxiu,\wh=\cxip$ in \eqref{erreq2}, we obtain the following error equations
\begin{align*}
\intqj{\cxiut\cxiupt}&=-\intqj{\cxiut\cetupt}+\hqjo{\bs f(\bs u)-\bs f(\uh)}{\cxiu}-\hqjt{\cbahf\cxip}{\cxiu}\notag\\
&\ind-\hqjt{\cbahf\cetp}{\cxiu},\\ \intqj{\cxiptr\cxip}&=-\intqj{\cxiptr\cetp}-\hqjo{\cbahf\cxiu}{\cxip}-\hqjo{\cbahf\cetu}{\cxip}.
\end{align*}
Summing up the above equalities and using the second order Taylor expansion $\bs f(\bs u)-\bs f(\uh) = \cfpu(\bs u)\cxiu + \cfpu(\bs u)\cetu - \bs e_{\bs u}^\trs \bs H \bs e_{\bs u}$ with $\bs e_{\bs u}^\trs \bs H \bs e_{\bs u} = (\bs e_{\bs u}^\trs \bs H_1 \bs e_{\bs u}, \ldots, \bs e_{\bs u}^\trs \bs H_m \bs e_{\bs u})^\trs$ and $\bs H_i$ being the Hessian matrix \cite{MRJ2018}, we arrive at
\begin{align}\label{erreq4} \hf\dt\norms{\cxiu}+\norms{\cxip}
=\Lambda_1 +  \Lambda_2 + \Lambda_3 + \Lambda_4 +\Lambda_5,
\end{align}
where
\begin{align*}
	\Lambda_1 & =-\intqj{\cxiut\cetupt}-\intqj{\cxiptr\cetp},\\
	\Lambda_2 & =\hqjo{\cfpu(\bs u)\cxiu}{\cxiu},\\
	\Lambda_3 & =\hqjo{\cfpu(\bs u)\cetu}{\cxiu},\\
	\Lambda_4 &=-\hqjo{\bs e_{\bs u}^\trs \bs H \bs e_{\bs u}}{\cxiu},\\
	\Lambda_5 & =-\hqjt{\cbahf\cxip}{\cxiu}-\hqjo{\cbahf\cxiu}{\cxip}
-\hqjt{\cbahf\cetp}{\cxiu}-\hqjo{\cbahf\cetu}{\cxip}
\end{align*}
will be estimated separately.

By the Cauchy-Schwarz inequality, the optimal projection error estimates \eqref{op-pro} and Young's inequality, we get
\begin{subequations}\label{eres}
\begin{align}
\Lambda_1 &\leq C\ho\norm{\cxiu}+C\ho\norm{\cxip} \notag\\
&\leq C\norms{\cxiu}+\hf \norms{\cxip} + C\hot.\label{eres1}
\end{align}
Using integration by parts and taking into account the \textcolor{blue}{symmetric positive definite} matrix $\cfpu(\bs u)$, one has
\begin{align}\label{eres2}\notag
\Lambda_2 & =\qh\intj{\cxiuxt\cfpu(\bs u)\cxiu}+\qh\vxr{\jumpxut}\vxr{\big(\cfpu(\bs u)\cxiuhth\big)}\\\notag
		&=-\hf\qh\intj{\cxiut\partial_x(\cfpu(\bs u))\cxiu}-\qh\vxr{\jumpxut}\vxr{\big(\cfpu(\bs u)\xiuave\big)}\\\notag
		&\ind +\qh\vxr{\jumpxut}\vxr{\big(\cfpu(\bs u)\cxiuhth\big)}\\\notag
		&=-\hf\qh\intj{\cxiut\partial_x(\cfpu(\bs u))\cxiu}-\Big(\theta-\hf\Big)\qh\vxr{\jumpxut}\vxr{\big(\cfpu(\bs u)\jumpxu\big)}\\
		&\leq C \norms{\cxiu},
\end{align}
since $\theta > \hf.$ $\Lambda_3$ can be bounded by the local linearization of $\cfpu(\bs u)$ at $\bs u_j$ and projection property \eqref{op-pro}; it reads,
\begin{align}\label{eres3}\notag
\Lambda_3  & = \qh\intj{\cxiuxt\cfpu(\bs u)\cetu}+\qh\vxr{\jumpxut}\vxr{\big(\cfpu(\bs u)\cetuhth\big)}\\\notag
		& = \qh\intj{\cxiuxt\cfpu(\cuc)\cetu}+\qh\intj{\cxiuxt\big(\cfpu(\bs u)-\cfpu(\cuc)\big)\cetu}\\
		&\leq C\ho\norm{\cxiu} \notag\\
  &  \le  C(\norms{\cxiu} + \hot),
\end{align}
where we have also used the inverse inequality and $\normm{\cfpu(\bs u)-\cfpu(\cuc)} \le C \normm {\bs u- \cuc}\le C h$.
For the high order term $\Lambda_4$, it is easy to show that
\begin{align}
\Lambda_4 &\le C \norminf{\ceu}\big(\norm {\bs e_{\bs u}} \norm {(\cxiu)_x} +\normb {\bs e_{\bs u}} \normb \cxiu \big) \notag\\
&\le C h^{-1} \norminf{\ceu}\big(\norm \cxiu +\norm \cetu + h^\hf \normb \cetu\big)\norm \cxiu \notag\\
&\le C (\norms{\cxiu} + \hot), \label{eres4}
\end{align}
in which the bound \eqref{aprioroass} is also used.
For $\Lambda_5$, by Lemma \ref{lmm-dgvr-5} and the projection property in \eqref{op-pro}, one has
\begin{equation}\label{eres5}
\Lambda_5=0.
\end{equation}
\end{subequations}
Collecting \eqref{eres1}--\eqref{eres5} into \eqref{erreq4}, we have
\begin{equation*}
\hf\dt\norms{\cxiu}+\hf\norms{\cxip}\le C\norms{\cxiu} +  C\hot.
\end{equation*}
This, in combination with the Gronwall's  inequality, leads to the desired optimal error estimates of Theorem \ref{thm-oeep-3}.
\end{proof}

\textcolor{red}{\subsubsection{The case of Dirichlet boundary conditions}
For Dirichlet boundary conditions,
\begin{equation}\label{dirichlet}
\bs u(0,t)=\bs g_1(t), \quad \bs u(2\pi,t)=\bs g_2(t),
\end{equation}
the numerical fluxes are chosen as
\begin{equation*}
\vxr{\big(\cfh(\uh),\cuhh,\cphh\big)}=
\begin{cases}
\big(\bs f(\bs g_1),\bs g_1,\ph^+\big)_\hf, &j=0,\\
\big(\bs f(\uh)^{(\theta)},\uhth,\phthg\big)_{j+\hf}, &j=1,\dots,N-1,\\
\big(\bs f(\uh^-),\bs g_2,\ph^-\big)_{N+\hf}, &j=N.
\end{cases}
\end{equation*}
The global projections $\bphf \bs u$ and $\bphz \bs p$ are modified to $\bphfD \bs u$ and $\bphzD \bs p$ satisfying
\begin{align*}
&(\bphfD\bs u,\vh)_j=(\bs u,\vh)_j, \quad \forall\vh\in\bpkij,\\
&\vxr{(\bphfD\bs u)}^{\thkh}=\vxr{\bs u}^{\thkh}, \quad\quad\;\; \forall j\in \mathbb{Z}_{N-1}^+,\\
&f'(\bs u_{N+\hf})(\bphfD\bs u)_{N+\hf}^-=f'(\bs u_{N+\hf})\bs u_{N+\hf}^-+\cbahf(\bphzD \bs p-\bs p)_{N+\hf}^-,
\end{align*}
and 
\begin{align*}
&(\bphzD\bs p,\vh)_j=(\bs p,\vh)_j, \quad\forall\vh\in\bpkij,\\
&\vxl{(\bphzD\bs p)}^{\thgkh}=\vxl{\bs p}^{\thgkh}, \quad\quad\;\;\forall j\in \mathbb{Z}_N^+\backslash \left\{1\right\},\\
&(\bphzD\bs p)_{\hf}^+=\bs p_{\hf}^+.
\end{align*}}

\textcolor{red}{
By the above numerical fluxes and projections, we can prove the optimal error estimates for the Dirichlet boundary conditions.}

\textcolor{red}{\begin{theorem}\label{thm-oeeD-44}
Under Dirichlet boundary conditions \eqref{dirichlet}, for a quasi-uniform mesh and $k \ge 1$, we have, for any $T>0$, the following error estimates
\begin{equation*}
\norm{\bs u(T)-\uh(T)} \leq C\ho,
\end{equation*}
where the positive constant $C$ depends on $\norm{\bs u}_{k+2}$, $\norm{\cut}_{k+1}$, $\bs f$, but is independent of $h$.
\end{theorem}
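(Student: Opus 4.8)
The plan is to keep the energy argument of Theorem~\ref{thm-oeep-3} intact in its interior structure, substituting the Dirichlet-adapted projections $\bphfD,\bphzD$ for $\bphf,\bphz$, and then to show that the boundary contributions at $x_{\hf}$ and $x_{N+\hf}$ that no longer telescope away (periodicity having been lost) are either sign-definite or cancel by design, thanks to the one-sided boundary fluxes in \eqref{dirichlet} and the two special boundary conditions built into $\bphfD$ and $\bphzD$.

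First I would verify that $\bphfD$ and $\bphzD$ are well defined and retain the optimal bounds \eqref{pru}--\eqref{prp}. The interior volume moments and interior weighted-trace conditions coincide with those in \eqref{pro-s}, so well-posedness reduces to marching through the cells. The condition $(\bphzD\bs p)^+_{\hf}=\bs p^+_{\hf}$ together with the interior $\bar\theta$-matching determines $\bphzD\bs p$ cell by cell from $x_\hf$ toward $x_{N+\hf}$; with $\bphzD\bs p$ in hand, the coupling relation at $x_{N+\hf}$ together with the interior $\theta$-matching then determines $\bphfD\bs u$ cell by cell from $x_{N+\hf}$ back toward $x_\hf$. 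A standard scaling and Bramble--Hilbert argument on each local $(k{+}1)$-dimensional solvability problem yields uniqueness and the $O(\ho)$ estimates \eqref{pru}--\eqref{prp}.

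Next I would form the error equations as in \eqref{erreq2} with the fluxes of \eqref{dirichlet}. Because $\cuhh$ equals the exact data $\bs g_1,\bs g_2$ at both ends, the $\uh$-flux error vanishes at $x_\hf$ and $x_{N+\hf}$, and because $\cfh=\bs f(\bs g_1)$ at the left, the convective flux error also vanishes at $x_\hf$. Taking $\vh=\cxiu,\wh=\cxip$ and summing gives $\hf\dt\norms{\cxiu}+\norms{\cxip}=\Lambda_1+\Lambda_2+\Lambda_3+\Lambda_4+\mathcal B$, where $\Lambda_1$--$\Lambda_4$ are the interior contributions estimated exactly as in \eqref{eres1}--\eqref{eres4} (for $\Lambda_3$ one again uses $\cetu^{(\theta)}=\bs 0$ at interior faces), and $\mathcal B$ collects every endpoint value, including the two boundary terms split off from the integration by parts that produces $\Lambda_2$. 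Using the interior cancellation behind Lemma~\ref{lmm-dgvr-5}, $\mathcal B$ reduces to values at $x_\hf$ and $x_{N+\hf}$ only.

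The crux, and the main obstacle, is to show $\mathcal B\le C\norms{\cxiu}+C\hot$, and here the one-sided fluxes and the two special boundary conditions enter decisively. First, the pure-$\cxi$ diffusive boundary products cancel identically: the one-sided fluxes $\cphh=\ph^+,\ph^-$ played against the zero-error $\uh$-fluxes leave $-\hqjt{\cbahf\cxip}{\cxiu}-\hqjo{\cbahf\cxiu}{\cxip}$ with its endpoint volume terms exactly matching its endpoint flux terms. Second, the left $\cetp$-term is annihilated by $\cetp^+_{\hf}=\bs 0$, and the $\Lambda_2$ endpoint $\hf(\cxiu^-)^{\trs}\cfpu(\bs u)\cxiu^-$ at $x_{N+\hf}$ combines with the one-sided convective residual $-(\cxiu^-)^{\trs}\cfpu(\bs u)\cxiu^-$ to give $-\hf(\cxiu^-)^{\trs}\cfpu(\bs u)\cxiu^-\le0$ by the symmetric positive definiteness of $\cfpu(\bs u)$, while the left endpoint $-\hf(\cxiu^+)^{\trs}\cfpu(\bs u)\cxiu^+\le0$ as well. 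Third, the dangerous mixed residuals at $x_{N+\hf}$, namely $-(\cxiu^-)^{\trs}\cfpu(\bs u)\cetu^-$ from the convective term and $(\cxiu^-)^{\trs}\cbahf\cetp^-$ from the diffusive coupling, cancel exactly because the boundary condition on $\bphfD$ is equivalent to $\cfpu(\bs u_{N+\hf})\cetu^-_{N+\hf}=\cbahf\cetp^-_{N+\hf}$, which is precisely why it was imposed. The only genuine residual left in $\mathcal B$ is the quadratic Taylor remainder $(\cxiu^-)^{\trs}(\ceut\bs H\ceu)^-$ at $x_{N+\hf}$, bounded by $C\normb{\cxiu}\,\norminf{\ceu}\,\normb{\ceu}\le C(\norms{\cxiu}+\hot)$ via \eqref{aprioroass}, the trace inverse inequality, and \eqref{pru}. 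Collecting these gives $\hf\dt\norms{\cxiu}+\hf\norms{\cxip}\le C\norms{\cxiu}+C\hot$, whence Gronwall's inequality and the triangle inequality with \eqref{pru} yield the stated estimate. The delicate part throughout is the well-posedness and optimality of the coupled projection pair near $x_{N+\hf}$ together with the exact boundary bookkeeping that renders $\mathcal B$ nonpositive up to the $O(\hot)$ remainder.
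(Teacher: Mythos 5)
Your proposal is correct and follows essentially the same route as the paper's proof: the same energy argument with the two Dirichlet-adapted projections, the same sign-definite convective boundary terms $-\hf(\cxiu^{\pm})^{\trs}\cfpu(\bs u)\cxiu^{\pm}\le 0$ from the symmetric positive definiteness of $\cfpu$, the exact cancellation at $x_{N+\hf}$ between the convective residual $-(\cxiu^-)^{\trs}\cfpu(\bs u)\cetu^-$ and the diffusive term $(\cxiu^-)^{\trs}\cbahf\cetp^-$ via the coupled condition built into $\bphfD$, and the vanishing left-endpoint term from $(\cetp)^+_{\hf}=\bs 0$, followed by Gronwall. The only addition is your explicit cell-by-cell marching argument for the well-posedness and optimal approximation of the projection pair (constructing $\bphzD\bs p$ first, then $\bphfD\bs u$), which the paper asserts without proof.
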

\begin{proof}
Using an argument similar to that in deriving \eqref{erreq4}, we have
\begin{align*}
 \hf\dt\norms{\cxiu}+\norms{\cxip}
=\widetilde{\Lambda}_1 +  \widetilde{\Lambda}_2 + \widetilde{\Lambda}_3 + \widetilde{\Lambda}_4 +\widetilde{\Lambda}_5,
\end{align*}
where
\begin{align*}
\widetilde{\Lambda}_1 & =-\intqj{\cxiut\cetupt}-\intqj{\cxiptr\cetp},\\
\widetilde{\Lambda}_2 & =\qh\intj{\cxiuxt\cfpu(\bs u)\cxiu}+\xqh\vxr{\jumpxut}\vxr{\big(\cfpu(\bs u)\cxiuhth\big)}-\big((\cxiu^-)^\trs\bs f'(\bs u)\cxiu^-\big)_{N+\hf},\\
\widetilde{\Lambda}_3 & =\qh\intj{\cxiuxt\cfpu(\bs u)\cetu}+\xqh\vxr{\jumpxut}\vxr{\big(\cfpu(\bs u)\cetuhth\big)}-\big((\cxiu^-)^\trs\bs f'(\bs u)\cetu^-\big)_{N+\hf},\\
\widetilde{\Lambda}_4 &=-\qh\intj{\cxiuxt\ceut \bs H \ceu}-\xqh\vxr{\jumpxut}\vxr{\big((\ceuhth)^\trs\bs H \ceuhth\big)}+\big((\cxiu^-)^\trs(\ceu^-)^\trs\bs H\ceu^-\big)_{N+\hf},\\
\widetilde{\Lambda}_5 & =-\qh\bigg(\intj{\cxiuxt\cbahf\cxip}+\intj{(\cxip)_x^\trs\cbahf\cxiu}+\intj{\cxiuxt\cbahf\cetp}+\intj{(\cxip)_x^\trs\cbahf\cetu}\bigg)\\
&\ind -\xqh\vxr{\bigg(\jumpxut\cbahf\cxip^{(\bar{\theta})}
 +\jump{\cxip}^\trs\cbahf\cxiuhth+\jumpxut\cbahf\cetp^{(\bar{\theta})}+\jump{\cxip}^\trs\cbahf\cetuhth\bigg)}\\
&\ind +\big((\cxiu^-)^\trs\cbahf(\cxip^-+\cetp^-)\big)_{N+\hf}-\big((\cxiu^+)^\trs\cbahf(\cxip^++\cetp^+)\big)_{\hf}
\end{align*}
will be estimated separately.
Paralleling to the estimates of $\Lambda_1$--$\Lambda_5$ in \eqref{eres}, it is easy to show
\begin{align*}
\widetilde{\Lambda}_1 &\le C\norms{\cxiu}+\hf \norms{\cxip} + C\hot,\\
\tilde{\Lambda}_2&\le C \norms{\cxiu}-\hf\big((\cxiu^-)^\trs\bs f'(\bs u)\cxiu^-\big)_{N+\hf}-\hf\big((\cxiu^+)^\trs\bs f'(\bs u)\cxiu^+\big)_{\hf},\\
\widetilde{\Lambda}_3&\le C(\norms{\cxiu} + \hot)-\big((\cxiu^-)^\trs\bs f'(\bs u)\cetu^-\big)_{N+\hf},\\
\widetilde{\Lambda}_4&\le C (\norms{\cxiu} + \hot),\\
\widetilde{\Lambda}_5&=\big((\cxiu^-)^\trs\cbahf(\cetp^-)\big)_{N+\hf}-\big((\cxiu^+)^\trs\cbahf(\cetp^+)\big)_{\hf}.
\end{align*}
 Furthermore, with the help of the newly defined projection  $\bphfD \bs u$ and $\bphzD \bs p$ in combination with  the symmetric positive property of flux Jacobian, we have
\begin{equation*}
\hf\dt\norms{\cxiu}+\hf\norms{\cxip}\le C (\norms{\cxiu} + \hot).
\end{equation*}
Thus, the optimal error estimates hold for Dirichlet boundary conditions. This completes the proof of Theorem \ref{thm-oeeD-44}.
\end{proof}}

\section{Extension to symmetrizable flux Jacobian and the fully nonlinear cases}\label{sc3}
\subsection{Optimal error estimates for the symmetrizable flux Jacobian}\label{sc31}
\subsubsection{A new projection}
For the case with symmetrizable flux Jacobian, the projection errors of the prime variable $\bs u$ for the convective and diffusive terms cannot be simultaneously eliminated when simply choosing the GGR projection $\bphf\bs u$, as the numerical fluxes are different. To cancel the leading projection errors, inspired by \cite{CMZ2017}, we introduce a modified projection $\pro\bs p=[\textcolor{blue}{\prot \po},\dots,\textcolor{blue}{\prot \xpm}]^\trs$ for the auxiliary variable $\bs p=(p_1, \ldots, p_m)^\trs$ with each component \textcolor{blue}{$\prot p_i$} $(i = 1, \ldots, m)$ satisfying for  $j=1,\dots,N$
\begin{subequations}\label{pp-s}
\begin{align}
\intj{(p_i - \textcolor{blue}{\prot p_i})v_h}&=0,\quad \forall v_h\in P^{k-1}(I_j),\label{pps1}\\
(\textcolor{blue}{\prot p_i} - p_i)_{j-\hf}^{\thgkh} & = c^i_j,
\end{align}
\end{subequations}
where $c^i_j$ is the $i$-th component of $-\cbahfi\vxl{\big(\cfpu(\bs u)\cetuh)}$ with $\cbahfi=diag(\ao^{-\hf},\dots,\am^{-\hf})$ and $\cetu = \bs u - \bphf \bs u$ that is known. Clearly, the above projection implies the following relationship for projection errors, which is essential to the estimate of $\Pi_3+\Pi_5$ in the proof of Theorem \ref{thm-oees-4},
\begin{subequations}
\begin{align}
\intj{\vht(\bs p-\pro\bs p)}&=0,\quad \forall\vh\in\bpkij,\label{pn1}\\
\bs A^\hf(\bs p-\pro\bs p)_{j+\hf}^{\thgkh}&=\big(\cfpu(\bs u)\cetuh\big)_{j+\hf},\quad\forall j=1,\dots,N. \label{pn2}
\end{align}
\end{subequations}
Similar to the scalar case \cite{CMZ2017}, we can derive the following optimal approximation property for the above modified projection.
\begin{lemma}\label{lmm-proeup-6}
Assume that $\bs u\in H^{k+2} $ is periodic. Then, the above defined projection is unique and the following approximation property holds
\begin{equation}\label{ap}
	\norm{\bs p-\pro\bs p}\le C\ho,
\end{equation}
where the positive constant $C$ is independent of h.
\end{lemma}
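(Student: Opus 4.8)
The plan is to reduce the vector-valued projection to $m$ scalar problems and to compare the modified projection $\pro\bs p$ componentwise with the standard generalized Gauss--Radau (GGR) projection $\bphz\bs p$ of \eqref{pro-s2}, whose existence and error bound \eqref{prp} I will take for granted. The two projections share the same local $L^2$ conditions and differ only in the interface data, so the whole argument hinges on controlling the effect of the small corrections $c^i_j$.

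I would first settle existence and uniqueness. Fix a component index $i$ and set $\psi_i:=\prot p_i-\bphzt p_i$. Since both $\prot p_i$ and $\bphzt p_i$ reproduce $p_i$ when tested against $P^{k-1}(\xqj)$, the difference $\psi_i$ is $L^2$-orthogonal to $P^{k-1}(\xqj)$ on every element, so on each $\xqj$ it is a single scaled degree-$k$ Legendre mode, $\psi_i|_{\xqj}=\gamma_j^i L_k^j$, with $L_k^j(\vxr{x})=1$ and $L_k^j(\vxl{x})=(-1)^k$. Subtracting the interface condition of $\bphzt$ from that of $\prot$ gives $(\psi_i)_{j-\hf}^{\thgkh}=c^i_j$, which in terms of the Legendre coefficients becomes the circulant recurrence
\begin{equation*}
\bar\theta\,\gamma_{j-1}^i+\theta(-1)^k\gamma_j^i=c^i_j,\qquad j=1,\dots,N.
\end{equation*}
Its matrix is $M=\theta(-1)^k\bs I+\bar\theta\bs S$ with $\bs S$ the cyclic shift; the eigenvalues $\theta(-1)^k+\bar\theta\omega$ over the $N$th roots of unity $\omega$ obey $|\theta(-1)^k+\bar\theta\omega|\ge\theta-\bar\theta>0$ exactly because $\theta>\hf$. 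Hence $M$ is invertible (uniqueness, and existence since $M$ is square) with the operator norm of $M^{-1}$ bounded by $(\theta-\bar\theta)^{-1}\le C$.

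For the approximation bound I would split $\norm{\bs p-\pro\bs p}\le\norm{\bs p-\bphz\bs p}+\norm{\bphz\bs p-\pro\bs p}$, the first term being $\le C\ho$ by \eqref{prp}. The second term is exactly $(\sum_{i,j}(\gamma_j^i)^2\norms{L_k^j}_{\xqj})^{1/2}$. The key input is the size of the corrections: since $c^i_j$ is the $i$th entry of $-\cbahfi\vxl{(\cfpu(\bs u)\cetuh)}$ with $\cetu=\bs u-\bphf\bs u$ and $\cbahfi,\cfpu(\bs u)$ bounded, I would estimate $\sum_{i,j}(c^i_j)^2\le C\normb{\cetu}^2\le C\hos$, the last step being the boundary part $h^\hf\normb{\cetu}\le C\ho$ of \eqref{pru}. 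Circulant stability then gives $\sum_{i,j}(\gamma_j^i)^2\le C\sum_{i,j}(c^i_j)^2\le C\hos$, and since $\norms{L_k^j}_{\xqj}\le Ch_j\le Ch$,
\begin{equation*}
\norms{\bphz\bs p-\pro\bs p}=\sum_{i,j}(\gamma_j^i)^2\norms{L_k^j}_{\xqj}\le Ch\sum_{i,j}(\gamma_j^i)^2\le C\hot,
\end{equation*}
i.e. $\norm{\bphz\bs p-\pro\bs p}\le C\ho$. Combining the two pieces yields \eqref{ap}.

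The main obstacle is the stability of the cyclic correction system: one must ensure the interface data $c^i_j$ are not amplified by the global coupling inherent in the projection, and this is precisely where $\theta>\hf$ is indispensable, since it keeps the circulant eigenvalues bounded away from zero (equivalently makes the propagation factor $\bar\theta/\theta<1$). A useful simplification to notice is that only the $L^2$-in-space boundary bound $\normb{\cetu}\le Ch^{k+\hf}$ from \eqref{pru} is required --- no pointwise $L^\infty$ interface estimate --- because the extra factor $h$ supplied by $\norms{L_k^j}_{\xqj}\le Ch_j$ together with quasi-uniformity absorbs the half power lost at the boundary.
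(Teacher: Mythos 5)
Your proof follows essentially the same route as the paper's: compare $\pro\bs p$ componentwise with $\bphz\bs p$, note that the difference is a single degree-$k$ Legendre mode per cell, solve the resulting $N\times N$ circulant system made invertible by $\theta>\hf$, and bound the interface data $c^i_j$ through the boundary projection error $\normb{\cetu}\le Ch^{k+\hf}$; the only cosmetic difference is that the paper controls the circulant inverse via its explicit form and the $1$-/$\infty$-norm bound, while you use its eigenvalues (legitimate, since circulant matrices are normal). One small correction: because $\bar\theta=1-\theta$ is negative when $\theta>1$ (a case the paper permits and tests numerically), the reverse triangle inequality gives the eigenvalue lower bound $|\theta(-1)^k+\bar\theta\omega|\ge\theta-|\bar\theta|=\min(2\theta-1,1)$ rather than $\theta-\bar\theta$; your stated bound fails, e.g., for $\theta=1.2$, $k$ even, $\omega=1$, where the eigenvalue equals $1<\theta-\bar\theta=1.4$, but positivity — and hence your uniqueness and stability conclusions — survives with the corrected constant.
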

\begin{proof}
We first prove the existence and uniqueness of $\pro\bs p$.
Let $\bs E=\pro\bs p-\bphz\bs p=[\textcolor{blue}{\prot\po-\bphzt\po},\dots,\textcolor{blue}{\prot\xpm-}$ $\textcolor{blue}{\bphzt\xpm}]^\trs$, where \textcolor{blue}{$\bphzt$} is the GGR projection defined in \eqref{pro-s2}. If we can prove the existence and uniqueness of $\bs E = (E^1, \ldots, E^m)^\trs$, then the  projection $\pro\bs p$ will exist and is unique. By the definition of the two projections \textcolor{blue}{$\bphzt p_i,$ $\prot p_i ~(i = 1, \ldots, m)$} in \eqref{pro-s2} and \eqref{pp-s}, we have
\begin{subequations}
\begin{align}\label{tproj}
\intj{E^i v_h}&=0,\quad \forall v_h\in  P^{k-1}(I_j),\\\label{tprojb}
(E^i)_{j-\hf}^{\thgkh}&=c^i_j,\quad j=1,\dots,N.
\end{align}
\end{subequations}
Denote the restriction of $E^i$ $(i = 1, \ldots, m)$ to each element $I_j$ as
\begin{equation}
E_j^i(x)=\qhl\ajl\legendjl(x)=\qhl\ajl\legendl(s),
\end{equation}
where $P_\ell(s)$ is the $\ell$th-order orthogonal Legendre polynomials on $[-1,1]$ with $s=\frac{2(x-x_j)}{h_j}$. Equality \eqref{tproj} and the orthogonality of Legendre polynomials yield
\begin{equation*}
\ajl=0,\quad \ell=0,\dots,k-1,\quad j=1,\dots,N.
\end{equation*}
Therefore, $E^i_j(x)=\ajk P_k(s).$ It follows from \eqref{tprojb} that
\begin{equation*}
\thg\cajok+(-1)^k \theta \cajk=  c^i_j.
\end{equation*}
For periodic boundary conditions, it can be written as a linear system
\begin{equation}
\bs B\akw=\bs c^i,
\end{equation}
where $\bs B=circ(\theta(-1)^k,0,\dots,0,\thg)$ is an $N\times N$ circulant matrix with first row $(\theta(-1)^k,0,\dots,0,\thg)$, and $\akw=[\aok,\dots,\ank]^\trs$, $\bs c^i=[c^i_1,\dots,c^i_N]^\trs$. It is easy to compute the determinant of $\bs B$
\begin{equation*}
	\det {\bs B}=\big(\theta(-1)^k\big)^N-(-\thg)^N.
\end{equation*}
Since the determinant of $\bs B$ is always not $0$ when $\theta > \hf$, the matrix $\bs B$ is invertible. This implies that the existence and uniqueness of $E^i_j(x)$. Further, we can obtain the existence and uniqueness of $\bs E$.

Next, we turn to the proof of the approximation result \eqref{ap}. In \cite{DPJ1979}, it is shown that the inverse of a nonsingular circulant matrix is also circulant, and thus
$$
\cbbin=\frac{1}{ \theta(-1)^k \big(1-q^N\big)}circ(1,q,\ldots, q^{N-1}),
$$
where $q=-\frac{\thg}{\theta(-1)^k},$ and $|q|<1$ since $\theta > \hf$. By some simple manipulations, we can find that the $1$- and $\infty$-norms of $\cbbin$ are equal and satisfy
\begin{equation*}
\normo{\cbbin}=\norminf{\cbbin}\le\frac{1}{ \theta(1-|q|)}.
\end{equation*}
hence the spectral norm satisfies
$$ \norms{\cbbin}\leq\normo{\cbbin}\norminf{\cbbin}\leq\frac{1}{\theta^2(1-|q|)^2}.
$$
Using the approximation property of $\bphf \bs u$ in \eqref{pru}, we deduce that
\begin{align*}
\norms{\akw}&=\norms{\cbbin\bs c^i}\leq\norms{\cbbin}\norms{\bs c^i}\leq C\norms{\bs c^i}\\
&\leq C h^{2k+1}\norm{\bs u}_{k+1}^2,
\end{align*}
where the positive constant $C$ is independent of $h$. Then,
$$ \norms{E^i}=\qh \intj{ \big( E_j^i(x) \big)^2}= \qh(\ajk)^2\norms{\legendjk(x)}_{I_j}=\qh\frac{h_j(\ajk)^2}{2k+1}\leq Ch\norms{\akw}.
$$
A combination of above two inequalities produces the optimal approximation result of $\norm {E^i}$. Finally, since $\norms{\bs E}=\qhi\norms{E^i}$ and using the approximation property of $\bphz\bs p$ in \eqref{prp}, we arrive at
$$
	\norm{\bs p-\pro\bs p} \le \norm {\bs p - \bphz\bs p} + \norm{\bs E} \le C\ho.
$$
This finishes the proof of Lemma \ref{lmm-proeup-6}.
\end{proof}

\subsubsection{The optimal error estimate}\label{sc312}
\begin{theorem}\label{thm-oees-4}
Assume that the exact solution $\bs u$ of \eqref{eqhs} is sufficiently smooth. Let $\uh$ be the LDG solution with fluxes \eqref{flx-s} and \eqref{flux1}. For a quasi-uniform mesh and $k \ge 1$, we have, for any $T>0$, the following error estimates
\begin{equation}\label{s-o}
	\norm{\bs u(T)-\uh(T)} \leq C\ho,
\end{equation}
where the positive constant $C$ is independent of $h$.
\end{theorem}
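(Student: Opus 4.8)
The plan is to reproduce the energy argument of Theorem~\ref{thm-oeep-3}, adapted in the two ways dictated by the loss of symmetry of $\cfpu$. First I decompose $\ceu=\cetu+\cxiu$ and $\cep=\cetp+\cxip$, now taking $\cetu=\bs u-\bphf\bs u$ (the characteristic GGR projection) and, crucially, $\cetp=\bs p-\pro\bs p$ with the \emph{modified} projection $\pro\bs p$ of \eqref{pp-s} in place of $\bphz\bs p$. Writing the Galerkin error equations in the form \eqref{erreq2} but with $\cfh$ the upwind-biased characteristic flux \eqref{flx-s}, I test the $\bs u$-equation with a function weighted by the symmetrizer $\bs Q=\bs Q(\bs u)$ (frozen to its value $\bs Q(\bs u(x_j))$ on each cell $I_j$ so that the test function stays in $\cbvh$). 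This yields an energy relation $\hf\dt\intqj{\cxiut\bs Q\cxiu}+\norms{\cxip}=\Pi_1+\Pi_2+\Pi_3+\Pi_4+\Pi_5$, in which the time derivative of the frozen $\bs Q$ produces only a commutator bounded by $C\norms{\cxiu}$, and, $\bs Q$ being uniformly SPD, the weighted quantity is equivalent to $\norms{\cxiu}$.

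The terms $\Pi_1$ (projection-error time derivatives) and $\Pi_4$ (the Hessian remainder $-\hqjo{\ceut\bs H\ceu}{\cxiu}$) are treated exactly as $\Lambda_1,\Lambda_4$ in \eqref{eres1} and \eqref{eres4}: Cauchy--Schwarz, the approximation estimates \eqref{pru} and \eqref{ap} of Lemma~\ref{lmm-proeup-6}, the \emph{a priori} bound \eqref{aprioroass}, and Young's inequality give $\Pi_1+\Pi_4\le C\norms{\cxiu}+\hf\norms{\cxip}+C\hot$.

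The first and, I expect, principal difficulty is the interface part of the convective self-interaction $\Pi_2$. Its volume part is $\qh\intj{(\cxiu)_x^\trs\bs Q\cfpu(\bs u)\cxiu}$, and here the symmetrization is decisive: by \eqref{qf}, $\bs Q\cfpu=\bs Q^{\hf}\bs K\bs Q^{\hf}$ is symmetric, so one integration by parts leaves only a volume term in $(\bs Q\cfpu)_x$ bounded by $C\norms{\cxiu}$ together with interface contributions. For a merely symmetrizable $\cfpu$ the naive interface form $\jumpxut\cfpu\jumpxu$ has no definite sign; the needed control instead comes from the upwind-biased flux \eqref{flx-s}, whose $i$th characteristic field injects the dissipation $(\theta-\hf)|\lambij|\jump{\zi}^2\ge0$. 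Re-expressing $\qhi|\lambij|\jump{\zi}^2$ through the eigendecomposition of $\bs K$ and the identity \eqref{qf} converts these into a definite viscosity of the correct sign, so that $\Pi_2\le C\norms{\cxiu}$; isolating and signing this boundary viscosity is the step I expect to carry the real weight of the proof.

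The second new feature, and the reason for introducing $\pro\bs p$, is the pairing $\Pi_3+\Pi_5$. Since $\cfpu$ is nonsymmetric, $\bphf\bs u$ removes the $(\theta)$-weighted interface error of $\cetu$ only in the positive-speed fields, so $\Pi_3$ now retains a nonzero interface term pairing $\jumpxut$ with $\cfpu(\bs u)\cetu$; simultaneously $\Pi_5$, which vanished in the SPD case, keeps the diffusive projection term $-\hqjt{\cbahf\cetp}{\cxiu}$ because $\pro\bs p\ne\bphz\bs p$. The defining property \eqref{pn2}, $\cbahf(\bs p-\pro\bs p)^{\thgkh}=\cfpu(\bs u)\cetu$ at every interface, is built precisely so that these two interface contributions coincide and cancel; the $\cxiu$--$\cxip$ interactions in $\Pi_5$ cancel by the generalized skew-symmetry of Lemma~\ref{lmm-dgvr-5} together with the orthogonality \eqref{pn1}, and the volume remainders are controlled by \eqref{ap}, leaving $\Pi_3+\Pi_5\le C\norms{\cxiu}+C\hot$. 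Collecting the five bounds gives $\hf\dt\intqj{\cxiut\bs Q\cxiu}+\hf\norms{\cxip}\le C\norms{\cxiu}+C\hot$, whence Gronwall's inequality, the norm equivalence, and $\cxiu(0)=\bs 0$ yield $\norm{\cxiu}\le C\ho$; combined with \eqref{pru} and the triangle inequality this gives \eqref{s-o}. As in the previous theorem, the \emph{a priori} assumption \eqref{pr1} is removed a posteriori by the bootstrap of \cite{ZS2006}.
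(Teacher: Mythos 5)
Your proposal is correct and follows essentially the same route as the paper: the same decomposition with the modified projection $\pro\bs p$ of \eqref{pp-s}, the same $\bs Q$-weighted test functions, the signed interface viscosity for $\Pi_2$ obtained from \eqref{qf} together with the characteristic upwind flux, the cancellation of $\Pi_3+\Pi_5$ engineered by \eqref{pn2}, and Gronwall plus norm equivalence to conclude. The one substantive deviation is where you freeze the symmetrizer: you take $\bs Q(\bs u(x_j))$ at the cell center, whereas the paper takes $\bs Q_r=\bs Q_{j+\hf}$, i.e., at the \emph{same} point where $\cfpu(\bs u_{j+\hf})$ is diagonalized; this matters because only with matching evaluation points is $\bs Q\cfpu=\bs Q^{1/2}\bs K\bs Q^{1/2}$ exactly symmetric and the interface form $\cbrt\bs Q\bs R|\bs \Lambda|$ exactly positive semi-definite, which is the crux of your ``definite viscosity'' step. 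With the cell-center choice these identities hold only up to $O(h)$ perturbations, which must then be absorbed into $C\norms{\cxiu}$ via Lipschitz continuity of $\bs Q$ and the inverse inequality --- a routine patch of the same kind the paper uses when linearizing $\cfpu$, but one your write-up silently assumes rather than performs. (Two trivial slips: with the weighted test function the diffusive energy term is $\norms{\bs Q^{1/2}\cxip}$ rather than $\norms{\cxip}$, and since $\uh(0)=\bs\pi_h\bs u_0$ one has $\norm{\cxiu(0)}\le C\ho$ rather than $\cxiu(0)=\bs 0$; neither affects the argument.)
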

\begin{proof}
By Galerkin orthogonality, the error equations are
\begin{align*}
\intqj{\vht\ceupt}&=\hqjbh{\bs f(\bs u)-\bs f(\uh)}{\vh}-\hqjt{\cbahf\cep}{\vh},\\
	\intqj{\wht\cep}&=-\hqjo{\cbahf\ceu}{\wh},\quad\forall (\vh,\wh) \in \cbvh\times\cbvh.
\end{align*}
Further, if we decompose $\ceu, \cep$ into $\ceu=(\bs u-\bphf\bs u) + (\bphf\bs u-\uh) \triangleq \cetu + \cxiu$ and $\cep=(\bs p-\pro\bs p) + (\pro\bs p-\ph)\triangleq \cetp + \cxip$, we get
\begin{subequations}\label{erreq22}
	\begin{align}
	\intqj{\vht\cxiupt}&=-\intqj{\vht\cetupt}+\hqjbh{\bs f(\bs u)-\bs f(\uh)}{\vh}-\hqjt{\cbahf\cxip}{\vh}\notag\\
&\ind-\hqjt{\cbahf\cetp}{\vh},\\ \intqj{\wht\cxip}&=-\intqj{\wht\cetp}-\hqjo{\cbahf\cxiu}{\wh}-\hqjo{\cbahf\cetu}{\wh},
\end{align}
\end{subequations}
which hold for all $(\vh,\wh) \in \cbvh\times\cbvh$.

Taking $\vh=\bs Q_r\cxiu,\wh=\bs Q_r\cxip$ in \eqref{erreq22}, we have
\begin{align*}
		\intqj{\cxiut\bs Q_r\cxiupt}&=-\intqj{\cxiut\bs Q_r\cetupt}+\hqjbh{\bs f(\bs u)-\bs f(\uh)}{\bs Q_r\cxiu}-\hqjt{\cbahf\cxip}{\bs Q_r\cxiu}\\
		&\ind-\hqjt{\cbahf\cetp}{\bs Q_r\cxiu},\\
		\intqj{\cxiptr\bs Q_r\cxip}&=-\intqj{\cxiptr\bs Q_r\cetp}-\hqjo{\cbahf\cxiu}{\bs Q_r\cxip}-\hqjo{\cbahf\cetu}{\bs Q_r\cxip},
\end{align*}
where $\bs Q_r = \bs Q_{j+\hf}$ denotes the value of the \textcolor{blue}{symmetric positive definite} matrix $\bs Q = \bs v'_{\bs u}$ at $x_{j+\hf}$; see Sec. \ref{sc213}. Summing up above two identities, we have
\begin{align}\label{erreq42}
\hf\dt\norms{\bs Q_r^\hfhk\cxiu}+\norms{\bs Q_r^\hfhk\cxip}=
\Pi_1 +  \Pi_2 + \Pi_3 + \Pi_4 +\Pi_5,
\end{align}
where
\begin{align*}
	\Pi_1 & =-\intqj{\cxiut\bs Q_r\cetupt}-\intqj{\cxiptr\bs Q_r\cetp}+\hf\intqj{\cxiut(\bs Q_r)_t\cxiu},\\
	\Pi_2 & =\hqjbh{\cfpu(\bs u)\cxiu}{\bs Q_r\cxiu},\\
	\Pi_3 & =\hqjbh{\cfpu(\bs u)\cetu}{\bs Q_r\cxiu},\\
	\Pi_4 &=- \hqjbh{\bs e_{\bs u}^\trs \bs H \bs e_{\bs u}}{\bs Q_r\cxiu},\\
	\Pi_5 & =-\hqjt{\cbahf\cxip}{\bs Q_r\cxiu}-\hqjo{\cbahf\cxiu}{\bs Q_r\cxip}-\hqjt{\cbahf\cetp}{\bs Q_r\cxiu}-\hqjo{\cbahf\cetu}{\bs Q_r\cxip}
\end{align*}
will be estimated separately.

Let us first consider $\Pi_1$. By the Cauchy-Schwarz inequality and Young's inequality, we get
\begin{subequations}
	\begin{align}\label{eres12}\notag
		\Pi_1 &\leq C\ho (\norm{\cxiu}+\norm{\bs Q_r^\hfhk\cxip})+ C\norms{\cxiu}\\
		&\le C\norms{\cxiu}+\hf\norms{\bs Q_r^\hfhk\cxip}+C\hot.
\end{align}
Using integration by parts, $\ceu=\bs u-\uh=\bs R_r(\bs z-\bs z_h)\triangleq \bs R_r\cet_{\bs z} + \bs R_r \cxiz$ and $\cfpu(\bs u_{j+\hf})\bs R_{j+\hf}=\bs R_{j+\hf}\bs \Lambda_{j+\hf}$, we arrive at
\begin{align}\label{eres22}\notag
\Pi_2   & =\qh\intj{\cxiuxt\vxr{\bs Q}\cfpu(\bs u)\cxiu}+\qh\vxr{\jumpxut}\vxr{\bs Q}\vxr{\big(\cfpu(\bs u)\cxiuh\big)}\\\notag
		&=\qh\intj{\cxiuxt\vxr{\bs Q}\cfpu(\vxr{\bs u})\cxiu}+\qh\vxr{\jumpxut}\vxr{\bs Q}\vxr{\big(\cfpu(\bs u)\cxiuh\big)}\\\notag
		&\ind +\qh\intj{\cxiuxt\vxr{\bs Q}(\cfpu(\bs u)-\cfpu(\vxr{\bs u}))\cxiu}\\\notag
		&\leq\qh\intj{\cxiuxt\vxr{\bs Q}^\hfhk\vxr{\bs K}\vxr{\bs Q}^\hfhk\cxiu}+\qh\vxr{\jumpxut}\vxr{\bs Q}\vxr{\big(\cfpu(\bs u)\cxiuh\big)}
		 +C\norms{\cxiu}\\\notag
		&=\qh\vxr{\bigg(\jumpxut\bs Q^\hfhk\bs K\bs Q^\hfhk\big(\cxiuh-\xiuave\big)\bigg)}+C\norms{\cxiu}\\\notag
		&=\qh\vxr{\bigg(\jumpxut\bs Q\cfpu(\bs u)\big(\cxiuh-\xiuave\big)\bigg)}+C\norms{\cxiu}\\\notag
		&=\qh\vxr{\bigg(\jumpxizt\cbrt\bs Q\cfpu(\bs u)\bs R\big(\cxizh-\xizave\big)\bigg)}+C\norms{\cxiu}\\\notag
		&=\qh\vxr{\bigg(\jumpxizt\cbrt\bs Q\bs R\bs \Lambda\big(\cxizh-\xizave\big)\bigg)}+C\norms{\cxiu}\\\notag
		&=-(\theta-\hf)\qh\vxr{\bigg(\jumpxizt\cbrt\bs Q\bs R|\bs \Lambda|\jumpxiz\bigg)}+C\norms{\cxiu}\\
		&\leq C \norms{\cxiu},
\end{align}
since $\theta > \hf$, where we have also used \eqref{qf}, the fact that $\bs K$ is symmetric and $\cbrt\bs Q\bs R|\bs \Lambda|$ is positive semi-definite.
By the local linearization of $\cfpu(\bs u)$ at $\bs u_j$, we first rewrite  $\Pi_3$ as
\begin{align*}
\Pi_3 & = \qh\intj{\cxiuxt\vxr{\bs Q}\cfpu(\bs u)\cetu}+\qh\vxr{\jumpxut}\vxr{\bs Q}\vxr{\big(\cfpu(\bs u)\cetuh\big)}\\
		& = \qh\intj{\cxiuxt\vxr{\bs Q}\cfpu(\cuc)\cetu}+\qh\intj{\cxiuxt\vxr{\bs Q}\big(\cfpu(\bs u)-\cfpu(\cuc)\big)\cetu}\\
		&\ind +\qh\vxr{\jumpxut}\vxr{\bs Q}\vxr{\big(\cfpu(\bs u)\cetuh\big)}.
\end{align*}
It is easy to show for high order term $\Pi_4$ that
\begin{align}\label{eres42}\notag
\Pi_4 &\le C \norminf{\ceu}\big(\norm {\bs e_{\bs u}} \norm {(\cxiu)_x} +\normb {\bs e_{\bs u}} \normb \cxiu \big) \notag\\
&\le C h^{-1} \norminf{\ceu}\big(\norm \cxiu +\norm \cetu + h^\hf \normb \cetu\big)\norm \cxiu \notag\\
&\le C (\norms{\cxiu} + \hot).
\end{align}
For $\Pi_5$, by the generalized skew-symmetry property in Lemma \ref{lmm-dgvr-5}, the symmetric property of $\bs Q_r\cbahf$ and the projection property in \eqref{op-pro} as well as \eqref{pps1}, we have
\begin{align}\label{eres52}\notag
		\Pi_5&=-\hqjt{\cbahf\cetp}{\bs Q_r\cxiu}\\\notag
		&=-\qh\intj{\cxiuxt\vxr{\bs Q}\cbahf\cetp}-\qh\vxr{\jumpxut}\vxr{\bs Q}\cbahf\vxr{(\cetpthg)}\\
		&=-\qh\vxr{\jumpxut}\vxr{\bs Q}\cbahf\vxr{(\cetpthg)}.
\end{align}
Consequently, by $\bs A^\hf(\cetpthg)_{j+\hf}=\big(\cfpu(\bs u)\cetuh\big)_{j+\hf}$ in \eqref{pn2} and the projection property in \eqref{op-pro}, we conclude that
\begin{align}\label{eres62}\notag
		\Pi_3 + \Pi_5&=\qh\intj{\cxiuxt\vxr{\bs Q}\cfpu(\cuc)\cetu}+\qh\intj{\cxiuxt\vxr{\bs Q}\big(\cfpu(\bs u)-\cfpu(\cuc)\big)\cetu}\\
& = \qh\intj{\cxiuxt\vxr{\bs Q}\big(\cfpu(\bs u)-\cfpu(\cuc)\big)\cetu} \notag \\
		&\leq C( \norms{\cxiu} + \hot).
\end{align}
\end{subequations}
Collecting \eqref{eres12}--\eqref{eres62} into \eqref{erreq42}, we have
\begin{equation*}
	\hf\dt\norms{\bs Q_r^\hf\cxiu}+\hf\norms{\bs Q_r^\hfhk\cxip}\le C\norms{\cxiu} + C\hot.
\end{equation*}
The Gronwall's inequality together with the equivalence of the norms $\norm{\cdot}$ and $\norm{\bs Q_r^\hf\cdot}$ (since $\bs Q_r^\hf$ is \textcolor{blue}{symmetric positive definite} uniformly) leads to the expected optimal error estimate \eqref{s-o}. This completes the proof of Theorem \ref{thm-oees-4}.
\end{proof}


\subsection{Optimal error estimates for the fully nonlinear diffusive case}\label{sc32}
In this section, we display how to obtain the optimal error estimate for the following fully nonlinear convection-diffusion equations
\begin{subequations}\label{fxxd}
\begin{align}
		\cut+\bs f(\bs u)_x&=(\bs A(\bs u)\bs u_{x})_{x}, \quad (x,t)\in \qj\times(0,T],\\
		\bs u(x,0)&=\bs u_0(x),  \qquad \quad~~\! x\in \qj,
\end{align}
\end{subequations}
where \textcolor{blue}{$\bs A(\bs u)=(a_{ij}(\bs u))_{i=1,\dots,m}^{j=1,\dots,m}$ is symmetric positive semi-definite, which implies that there exists a symmetric positive semi-definite matrix $\bs B(\bs u)$ such that $\bs B^2(\bs u)=\bs A(\bs u)$.}

We mainly show the proof for \eqref{fxxd} with \textcolor{blue}{symmetric positive definite} flux Jacobian; the case with symmetrizable flux Jacobian is discussed in Remark \ref{rem-diss-3}.
The above system can be written as an equivalent system
\begin{equation*}
	\cut+\bs f(\bs u)_x=\textcolor{blue}{{(\bs B(\bs u)\bs p)}_x},\quad \bs p=\textcolor{blue}{\bs B(\bs u)\bs u_x=\bs g(\bs u)_x},
\end{equation*}
\textcolor{blue}{where the function $\bs g(\bs u)$ satisfies $\bs g'(\bs u)=\bs B(\bs u)$.}
The LDG scheme is: find $(\uh,\ph)\in\cbvh\times\cbvh$ such that
\begin{subequations}
\begin{align*}
		\intj{\vht(\uh)_t}&=\hjbh{\bs f(\uh)}{\vh}-\hjt{\textcolor{blue}{\bs B(\uh)\ph}}{\vh},\\
		\intj{\wht\ph}&=-\hjo{\textcolor{blue}{\bs g(\uh)}}{\wh}
\end{align*}
\end{subequations}
hold for any $(\vh,\wh)\in\cbvh\times\cbvh$ and $j=1,\dots,N$. Different from the linear case, we need to redefine numerical fluxes for diffusive terms, i.e.,
\begin{equation*}
	\textcolor{blue}{\hat{\bs g}(\uh)=\bs g^{\thkh}(\uh)},\quad \cphh=\phthg,
\end{equation*}
and
\begin{equation*}
	\textcolor{blue}{\hat{\bs B}(\uh)=\frac{\jump{\bs g(\uh)}\jump{\uh}^\trs}{\big\|\jump{\uh}\big\|_M^2},}
\end{equation*}
where the subscript $j+\hf$ is omitted.

The projection for the prime variable $\bs u$ can be chosen as that in Sect. \ref{sc22}, namely $\bphf\bs u$. However, since $\bs A(\bs u)$ is nonlinear, a modified projection of $\bs p$, denoted by $\pw \bs p$,  shall be introduced satisfying
\begin{align*}
	\intj{\vht(\bs p-\pw\bs p)}&=0,\quad \forall\vh\in\bpkij,\\
	\Big(\textcolor{blue}{\bs B(\bs u)}(\bs p - \pw \bs p)^{\thgkh}\Big)_{j-\hf}&=-(\theta-\hf)\Big(\textcolor{blue}{\bs B'(\bs u)}\jump{\cetu}\textcolor{blue}{\bs g(\bs u)_x}\Big)_{j-\hf},\quad j=1,\dots,N,
\end{align*}
where $\bs B'(\bs u)$ is an $m\times m \times m$ tensor. 

If we decompose $\ceu, \cep$ into $\ceu=(\bs u-\bphf\bs u) + (\bphf\bs u-\uh) \triangleq \cetu + \cxiu$ and $\cep=(\bs p-\pw\bs p) + (\pw\bs p-\ph) \triangleq  \cetp + \cxip$, and take $\vh=\cxiu,\wh=\cxip$, we can obtain the following error equations
	\begin{align*}
	\intqj{\cxiut\cxiupt}&=-\intqj{\cxiut\cetupt}+\hqjbh{\bs f(\bs u)-\bs f(\uh)}{\cxiu}\\
	&\ind-\hqjt{\textcolor{blue}{\bs B(\bs u)}\bs p-\textcolor{blue}{\bs B(\uh)}\ph}{\cxiu},\\
	\intqj{\cxiptr\cxip}&=-\intqj{\cxiptr\cetp}-\hqjo{\textcolor{blue}{\bs g(\bs u)-\bs g(\uh)}}{\cxip}.
	\end{align*}
Adding above two equations, we have
\begin{align*}
	\hf\dt\norms{\cxiu}+\norms{\cxip}=\Theta_1 +  \Theta_2 + \Theta_3 + \Theta_4 +\Theta_5,
\end{align*}
where
\begin{align*}
	\Theta_1 & =-\intqj{\cxiut\cetupt}-\intqj{\cxiptr\cetp},\\
	\Theta_2 & =\hqjbh{\cfpu(\bs u)\cxiu}{\cxiu},\\
	\Theta_3 & =\hqjbh{\cfpu(\bs u)\cetu}{\cxiu},\\
	\Theta_4 &=- \hqjbh{\ceut\bs H\ceu}{\cxiu},\\
	\Theta_5 & =-\hqjt{\textcolor{blue}{\bs B(\bs u)}\bs p-\textcolor{blue}{\bs B(\uh)}\ph}{\cxiu}-\hqjo{\textcolor{blue}{\bs g(\bs u)-\bs g(\uh)}}{\cxip}.
	\end{align*}
By the choice of numerical fluxes and projections, we can see that the estimates of $\Theta_1$--$\Theta_4$ are exactly the same as that in the proof of Theorem \ref{thm-oeep-3}. So, we only need to consider the term $\Theta_5$. \textcolor{blue}{By Taylor expansion and the mean value theorem, we first have the following expression
	\begin{align*}
	\bs B(\bs u)\bs p-\bs B(\uh)\ph&=(\bs B'(\bs u)\ceu-\bs R_1)\bs p+(\bs B(\bs u)-\bs B'(\bs u)\ceu+\bs R_1)\bs e_p,\\
	\bs B(\bs u)\bs p-\hat{\bs B}(\uh)\hat{\bs p}_h&=(\bs B'(\bs u)\ave{\ceu}-\bs R_2)\bs p+(\bs B(\bs u)-\bs B'(\bs u)\ave{\ceu}+\bs R_2)\bs e_p^{(\bar{\theta})},
	\end{align*}
and
\begin{align*}
    \bs g(\bs u)-\bs g(\uh)&=B(\bs u)\ceu-\bs R_3,\\
	\bs g(\bs u)-\bs g^{\thkh}(\uh)&= B(\bs u)\ceuhth-\bs R_4,
\end{align*}
where $\bs R_i~ (i=1,\dots,4)$ are all high order terms derived from Taylor expansion.}

Using the same argument as that in \cite{Cheng2019} for scalar nonlinear convection-diffusion equations, and by virtue of the newly designed projection $\pw\bs p$ and \textcolor{blue}{the symmetric property of $\bs B(\bs u)$}, we can easily obtain the estimate for $\Theta_5$; it reads,
\begin{equation*}
	\Theta_5 \leq C\norms{\cxiu}+ \hf\norms{\cxip} + C\hot.
\end{equation*}
This, together with the estimates of $\Lambda_1$--$\Lambda_4$ in Theorem \ref{thm-oeep-3}, produces the desired optimal error estimates.

\begin{remark}\label{rem-diss-3}
For the case when the flux Jacobian is symmetrizable, a modified projection $\pb\bs p$ is defined to satisfy for $j= 1, \ldots, N$
\begin{align*}
	\intj{\vht(\bs p-\pb\bs p)}&=0,\quad \forall\vh\in\bpkij,\\
	\Big(\textcolor{blue}{\bs B(\bs u)}(\bs p - \pb \bs p)^{\thgkh}\Big)_{j-\hf}&=\Big(-(\theta-\hf)\textcolor{blue}{\bs B'(\bs u)}\jump{\cetu}\textcolor{blue}{\bs g(\bs u)_x}+\cfpu(\bs u)\cetuh\Big)_{j-\hf}.
\end{align*}
The optimal error estimates can be derived analogously, and details are omitted.
\end{remark}

\section{Numerical Experiments}\label{sc4}
In this section, we show several numerical examples to validate optimal error estimates of the LDG method using generalized numerical fluxes for nonlinear convection-diffusion systems. We use the explicit third order total variation diminishing Runge-Kutta time discretization and uniform meshes \textcolor{blue}{with $\Delta t = CFL_k~h^2$, and $CFL_k$ for $P^k$ polynomials are given in Table \ref{cfl}}.  Systems with different boundary conditions, long time simulations, and degenerate equations with discontinuous initial data  are numerically tested to show the sharpness of theoretical results and the efficiency of LDG methods with generalized fluxes.
\vspace{-0.3em}
\begin{table}[htp!]
\newsavebox{\tablebox}
	\caption{\label{cfl}\bh{ $CFL$ numbers used in numerical examples.}} \centering
	\vspace{-0.4em}
	\bigskip
	\begin{lrbox}{\tablebox}
			\begin{tabular}{ccccccccc}
     \hline
      Table/Figure&Table \ref{table2} & Table \ref{table3}&Table \ref{table4} &Table \ref{table5} & Table \ref{table6} &Table \ref{table7} & Figure \ref{figure1}& Figure \ref{figure2} \\ \hline
     $CFL_0$& 0.005 & 0.6 &0.0001 & 0.005 & 0.005 &0.005 & -- & -- \\
      $CFL_1$& 0.005 & 0.01 &0.0001 & 0.005 & 0.005 &0.005 & -- &0.05 \\
       $CFL_2$& 0.005 & 0.01 &0.00008 & 0.005 & 0.005 &0.005 & 0.005 & 0.01 \\
       $CFL_3$& 0.002 & 0.002 &0.00002 & 0.003 & 0.003 &0.0005 & -- & -- \\
       $CFL_4$& 0.001 & 0.002 &0.00001 & 0.002 & 0.0005 &0.0001 & -- & -- \\
     \hline
   \end{tabular}
	\end{lrbox}
	\scalebox{0.91}{\usebox{\tablebox}}
\end{table}
\begin{example}\label{ex2}
Consider the system with a linear diffusion term and periodic boundary conditions
\begin{align*}
	\cut+\bs f(\bs u)_x&=\bs A\bs u_{xx}+\bs g,\quad (x,t)\in(0,2\pi)\times(0,T]\\
	\bs u(x,0)&=\bs u_0(x),\quad\quad\quad x\in (0,2\pi),
\end{align*}
where $\bs u=(\uo,\ut,\uth)^\trs$, $\bs f(\bs u)=(\uo^3,\ut^3,\uth^3)^\trs$,
	$\bs A=diag(1,1,1)$. The source term and initial condition are suitably chosen such that the exact solution is
	\begin{equation}\label{exact:u} \bs u=(\exp(-t)\sin(2x+t),\exp(-t)\sin(2x-t),\exp(-2t)\sin(x+t))^\trs.
	\end{equation}	
\end{example}
The $L^2$ errors and numerical orders for Example \ref{ex2} are given in Table \ref{table2}, from which expected optimal order can be observed. In addition, the cases of a convection dominated problem, e.g., $\bs A=diag(10^{-4},10^{-4},10^{-4})$ and a strongly anisotropic problem, e.g., $\bs A=diag(100,100,100)$ are considered, for which the source terms are suitably chosen such that the exact solution \eqref{exact:u} is unchanged. The results shown in Tables \ref{table3} and \ref{table4} exhibit the desired optimal $(k+1)$th order, demonstrating that the theoretical results also hold for both convection dominated problems and strongly anisotropic problems.\vspace{0.3em}

\begin{table}[htp!]
	\caption{\label{table2} $L^2$ errors and orders for Example \ref{ex2} with different $\theta$, $\bs A=diag(1,1,1)$, $T=1$.} \centering
	\vspace{-0.4em}
	\bigskip
	\begin{lrbox}{\tablebox}
		\begin{tabular}{cccccccccc}
			\hline
			& &\multicolumn{2}{c}{$\theta = 0.8$}&
			&\multicolumn{2}{c}{$\theta = 1.0$}&&\multicolumn{2}{c}{$\theta = 1.2$}\\
			\cline{3-4}   \cline{6-7} \cline{9-10}
			&         {$N$}      & $L^2$ error& Order &~~~& $L^2$ error & Order & ~~~ &  $L^2$  error  & Order \\
			\hline
			\multirow{4}{*}{$P^0$}
			&$10$&   3.77E-01  &  --   &&   3.44E-01  &  --   &&  4.03E-01  &  --  \\
			&$20$&   1.77E-01  & 1.09  &&   1.86E-01  & 0.89  &&  2.18E-01  & 0.89\\
			&$40$&   9.26E-02  & 0.93  &&   9.80E-02  & 0.92  &&  1.09E-01  & 1.00 \\
			&$80$&   4.87E-02  & 0.93  &&   5.05E-02  & 0.96  &&  5.37E-02  & 1.02 \\ \vspace{-0.8em}
			\\
			\multirow{4}{*}{$P^1$}
			&$10$&   9.87E-02  &  --   &&   8.85E-02  &  --   &&  8.28E-02  &  --  \\
			&$20$&   2.89E-02  & 1.77  &&   2.22E-02  & 2.00  &&  1.91E-02  & 2.12\\
			&$40$&   7.78E-03  & 1.90  &&   5.55E-03  & 2.00  &&  4.66E-03  & 2.03 \\
			&$80$&   1.99E-03  & 1.97  &&   1.39E-03  & 2.00  &&  1.16E-03  & 2.01 \\ \vspace{-0.8em}
			\\
			\multirow{4}{*}{$P^2$}
			&$10$&   8.01E-03  &  --   &&   8.72E-03  &  --   &&  9.21E-03  &  --  \\
			&$20$&   9.10E-04  & 3.14  &&   1.11E-03  & 2.98  &&  1.31E-03  & 2.81\\
			&$40$&   1.11E-04  & 3.03  &&   1.39E-04  & 2.99  &&  1.71E-04  & 2.94 \\
			&$80$&   1.38E-05  & 3.01  &&   1.74E-05  & 3.00  &&  2.17E-05  & 2.98 \\ \vspace{-0.8em}
			\\
			\multirow{4}{*}{$P^3$}
			&$10$&   7.30E-04  &  --    &&  6.78E-04  &  --   &&  6.43E-04  &  --  \\
			&$20$&   5.51E-05  & 3.73  &&   4.29E-05  & 3.98  &&  3.76E-05  & 4.10\\
			&$40$&   3.70E-06  & 3.90  &&   2.69E-06  & 4.00  &&  2.31E-06  & 4.03 \\
			&$80$&   2.36E-07  & 3.97  &&   1.68E-07  & 4.00  &&  1.44E-07  & 4.01 \\ \vspace{-0.8em}
			\\
			\multirow{4}{*}{$P^4$}
			&$10$&   2.62E-05  &  --    &&  3.12E-05  &  --   &&  3.48E-05  &  --  \\
			&$20$&   6.51E-07  & 5.33  &&   9.88E-07  & 4.98  &&  1.29E-06  & 4.75\\
			&$40$&   1.91E-08  & 5.09  &&   3.10E-08  & 4.99  &&  4.27E-08  & 4.92 \\
			&$80$&   5.86E-10  & 5.02  &&   9.71E-10  & 5.00  &&  1.35E-09  & 4.98 \\ \vspace{-1.2em}
			\\
			\hline
		\end{tabular}
	\end{lrbox}
	\scalebox{1.04}{\usebox{\tablebox}}
\end{table}

\vspace{-0.3em}

\begin{table}[htp!]
	\caption{\label{table3} $L^2$ errors and orders for Example \ref{ex2} with a convection dominated problem and different $\theta$, $\bs A=diag(10^{-4},10^{-4},10^{-4})$, $T=1$.} \centering
	\vspace{-0.4em}
	\bigskip
	\begin{lrbox}{\tablebox}
		\begin{tabular}{cccccccccc}
			\hline
			& &\multicolumn{2}{c}{$\theta = 0.8$}&
			&\multicolumn{2}{c}{$\theta = 1.0$}&&\multicolumn{2}{c}{$\theta = 1.2$}\\
			\cline{3-4}   \cline{6-7} \cline{9-10}
			&         {$N$}      & $L^2$ error& Order &~~~& $L^2$ error & Order & ~~~ &  $L^2$  error  & Order \\
			\hline
			\multirow{4}{*}{$P^0$}
			&$40$&   4.69E-01  &  --   &&   4.69E-01  &  --   &&  4.69E-01  &  --  \\
			&$80$&   2.63E-01  & 0.84  &&   2.63E-01  & 0.84  &&  2.63E-01  & 0.84\\
			&$120$&   1.85E-01  & 0.86  &&   1.85E-01  & 0.86  &&  1.85E-01  & 0.86 \\
			&$160$&   1.44E-01  & 0.88  &&   1.44E-01  & 0.88  &&  1.44E-01  & 0.88 \\ \vspace{-0.8em}
			\\
			\multirow{4}{*}{$P^1$}
			&$20$&   6.56E-03  &  --   &&   6.55E-03  &  --   &&  6.53E-03  &  --  \\
			&$40$&   1.46E-03  & 2.16  &&   1.46E-03  & 2.17  &&  1.45E-03  & 2.17\\
			&$80$&   6.26E-04  & 2.09  &&   6.23E-04  & 2.10  &&  6.17E-04  & 2.11 \\
			&$160$&  3.47E-04  & 2.06  &&   3.45E-04  & 2.06  &&  3.40E-04  & 2.07 \\ \vspace{-0.8em}
			\\
			\multirow{4}{*}{$P^2$}
			&$20$&   4.27E-04  &  --   &&   4.28E-04  &  --   &&  4.30E-04  &  --  \\
			&$40$&   4.87E-05  & 3.13  &&   4.90E-05  & 3.13  &&  4.95E-05  & 3.12\\
			&$80$&   1.17E-05  & 3.51  &&   1.19E-05  & 3.50  &&  1.21E-05  & 3.47 \\
			&$160$&   3.86E-06  & 3.86  &&   3.94E-06  & 3.83  &&  4.06E-06  & 3.80 \\ \vspace{-0.8em}
			\\
			\multirow{4}{*}{$P^3$}
			&$20$&   2.88E-06  &  --    &&  2.86E-06  &  --   &&  2.82E-06  &  --  \\
			&$40$&   1.69E-07  & 3.73  &&   1.67E-07  & 3.98  &&  1.62E-07  & 4.12\\
			&$80$&   3.33E-08  & 3.90  &&   3.26E-08  & 4.00  &&  3.15E-08  & 4.05 \\
			&$160$&   1.06E-08  & 3.97  &&   1.03E-08  & 4.00  &&  9.86E-09  & 4.04 \\ \vspace{-0.8em}
			\\
			\multirow{4}{*}{$P^4$}
			&$20$&   9.25E-08  &  --    &&  9.34E-08  &  --   &&  9.48E-08  &  --  \\
			&$40$&   1.81E-09  & 5.67  &&   1.87E-09  & 5.64  &&  1.96E-09  & 5.59\\
			&$80$&   1.72E-10  & 5.81  &&   1.81E-10  & 5.76  &&  1.96E-10  & 5.68 \\
			&$160$&   3.43E-11  & 5.60  &&   3.69E-11  & 5.54  &&  4.29E-11  & 5.45 \\ \vspace{-1.2em}
			\\
			\hline
		\end{tabular}
	\end{lrbox}
	\scalebox{1.04}{\usebox{\tablebox}}
\end{table}

\vspace{-0.3em}

\begin{table}[htp!]
	\caption{\label{table4} $L^2$ errors and  orders for Example \ref{ex2} with a strongly anisotropic problem and different $\theta$, $\bs A=diag(100,100,100)$, $T=1$.} \centering
	\vspace{-0.4em}
	\bigskip
	\begin{lrbox}{\tablebox}
		\begin{tabular}{cccccccccc}
			\hline
			& &\multicolumn{2}{c}{$\theta = 0.8$}&
			&\multicolumn{2}{c}{$\theta = 1.0$}&&\multicolumn{2}{c}{$\theta = 1.2$}\\
			\cline{3-4}   \cline{6-7} \cline{9-10}
			&         {$N$}      & $L^2$ error& Order &~~~& $L^2$ error & Order & ~~~ &  $L^2$  error  & Order \\
			\hline
			\multirow{4}{*}{$P^0$}
			&$10$&   5.19E-01  &  --   &&   3.51E-01  &  --   &&  3.51E-01  &  --  \\
			&$20$&   1.91E-01  & 1.44  &&   1.70E-01  & 1.04  &&  1.75E-01  & 1.01\\
			&$30$&   1.19E-01  & 1.17  &&   1.13E-01  & 1.01  &&  1.15E-01  & 1.04 \\
			&$40$&   8.71E-02  & 1.08  &&   8.45E-02  & 1.01  &&  8.53E-02  & 1.03 \\ \vspace{-0.8em}
			\\
			\multirow{4}{*}{$P^1$}
			&$10$&   9.44E-02  &  --   &&   8.82E-02  &  --   &&  8.32E-02  &  --  \\
			&$20$&   2.89E-02  & 1.71  &&   2.22E-02  & 1.99  &&  1.90E-02  & 2.13\\
			&$30$&   1.36E-02  & 1.86  &&   9.86E-03  & 2.00  &&  8.33E-03  & 2.04 \\
			&$40$&   7.82E-03  & 1.92  &&   5.55E-03  & 2.00  &&  4.66E-03  & 2.02 \\ \vspace{-0.8em}
			\\
			\multirow{4}{*}{$P^2$}
			&$10$&   8.06E-03  &  --   &&   8.73E-03  &  --   &&  9.19E-03  &  --  \\
			&$20$&   9.12E-04  & 3.14  &&   1.11E-03  & 2.98  &&  1.31E-03  & 2.81\\
			&$30$&   2.66E-04  & 3.04  &&   3.30E-04  & 2.99  &&  4.02E-04  & 2.92 \\
			&$40$&   1.11E-04  & 3.02  &&   1.39E-04  & 3.00  &&  1.72E-04  & 2.96 \\ \vspace{-0.8em}
			\\
			\multirow{4}{*}{$P^3$}
			&$10$&   7.30E-04  &  --    &&  6.77E-04  &  --   &&  6.42E-04  &  --  \\
			&$15$&   1.64E-04  & 3.68  &&   1.35E-04  & 3.97  &&  1.21E-04  & 4.12\\
			&$20$&   5.53E-05  & 3.79  &&   4.29E-05  & 3.99  &&  3.75E-05  & 4.06 \\
			&$25$&   2.34E-05  & 3.85  &&   1.76E-05  & 3.99  &&  1.53E-05  & 4.04 \\ \vspace{-0.8em}
			\\
			\multirow{4}{*}{$P^4$}
			&$10$&   2.63E-05  &  --    &&  3.13E-05  &  --   &&  3.48E-05  &  --  \\
			&$15$&   2.94E-06  & 5.41  &&   4.16E-06  & 4.98  &&  5.18E-06  & 4.69\\
			&$20$&   6.54E-07  & 5.22  &&   9.91E-07  & 4.99  &&  1.29E-06  & 4.82 \\
			&$25$&   2.08E-07  & 5.14  &&   3.25E-07  & 4.99  &&  4.35E-07  & 4.89 \\ \vspace{-1.2em}
			\\
			\hline
		\end{tabular}
	\end{lrbox}
	\scalebox{1.04}{\usebox{\tablebox}}
\end{table}

\begin{example}\label{ex3}
To illustrate the case with long time simulations, consider
\begin{align*}
	\cut+\bs f(\bs u)_x&=\bs A\bs u_{xx}+\bs g,\quad (x,t)\in(0,2\pi)\times(0,T]\\
	\bs u(x,0)&=\bs u_0(x),\quad\quad\quad x\in (0,2\pi)
\end{align*}
with periodic boundary conditions, where $\bs u=(\uo,\ut,\uth)^\trs$, $\bs f(\bs u)=(\uo^3,\ut^3,\uth^3)^\trs,$ $\bs A=diag(1,1,1)$. The source term and initial condition are suitably chosen such that the exact solution is
\begin{align*}
&\bs u=(\exp(-0.01t)\sin(2x+0.1t),\exp(-0.01t)\sin(2x-0.1t),\exp(-0.01t)\sin(x+0.1t))^\trs.
\end{align*}	
\end{example}
The $L^2$ errors for Example \ref{ex3} up to $T=200$ are shown in Figure \ref{figure1}, from which we can see that the LDG scheme exhibits excellent long time behaviors. The magnitude for errors of generalized fluxes is smaller compared to the standard upwind fluxes on the same meshes.

\begin{figure}[!htbp]
	\centering

		\begin{minipage}[t]{0.9\linewidth}
			\centering
			\includegraphics[width=\linewidth]{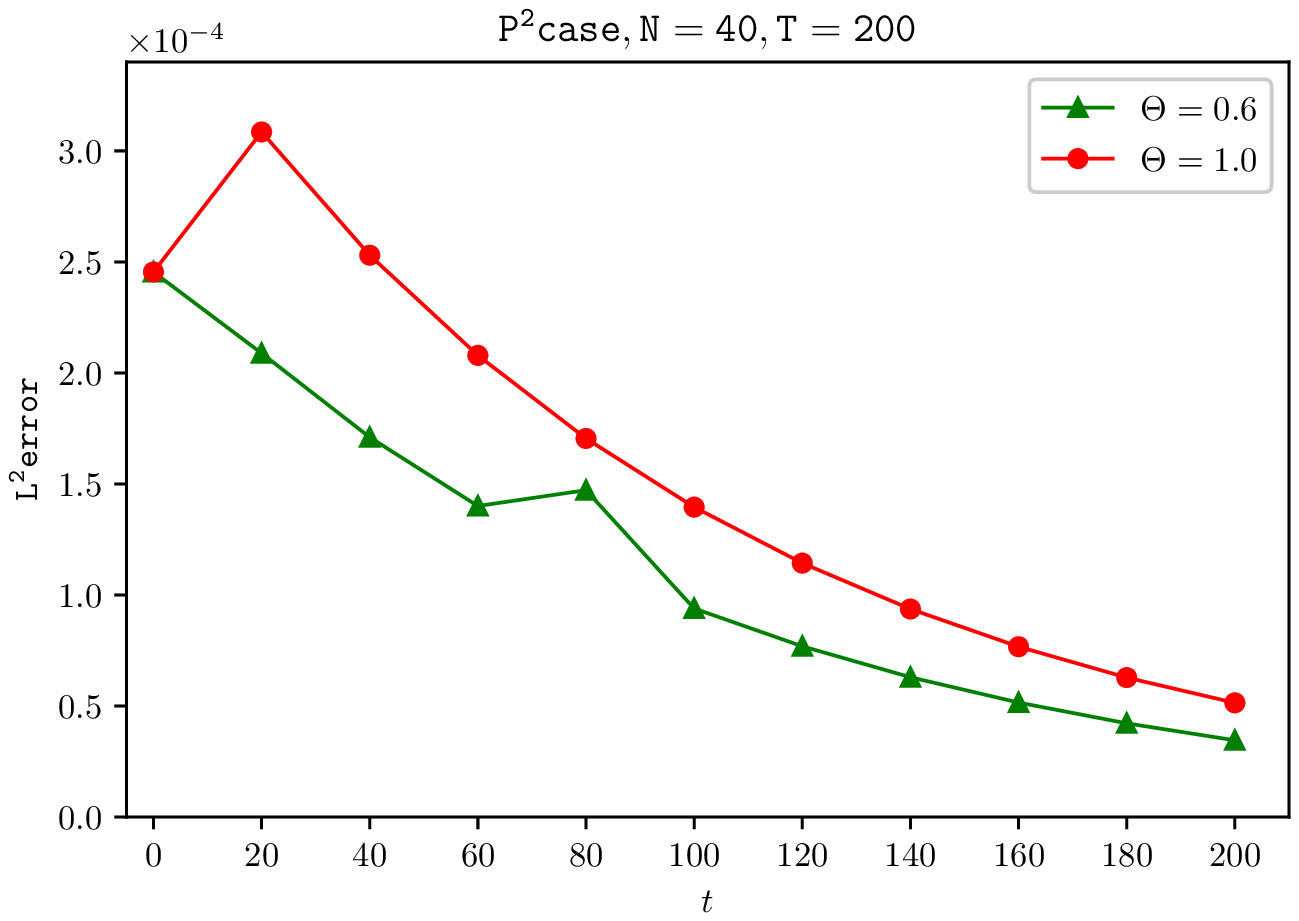}
		\end{minipage}
	\centering
	\caption{Time history of the $L^2$ errors for Example \ref{ex3} }\label{figure1}
\end{figure}
\vspace{-0.3em}
\begin{example}\label{ex4}
	To show the theoretical results also hold for mixed and Dirichlet boundary conditions, consider the system
	\begin{align*}
	\cut+\bs f(\bs u)_x&=\bs A\bs u_{xx}+\bs g,\quad (x,t)\in(0,\pi)\times(0,T]\\
	\bs u(x,0)&=\bs u_0(x),\quad\quad\quad x\in (0,\pi),
	\end{align*}
	where $\bs u=(\uo,\ut)^\trs,\bs f(\bs u)=(\uo^2/2,\ut^2/2)^\trs,\bs A=diag(1,1)$.
	The mixed boundary conditions
	\begin{align*}
	&\uo(0,t)=\exp(-t)\sin(t),\\
	&(\uo)_x(\pi,t)=3\exp(-t)\cos(3\pi+t),\\
	&\ut(0,t)=\exp(-t)\sin(t),\\
	&(\ut)_x(\pi,t)=3\exp(-t)\cos(3\pi+t),
	\end{align*}
	and Dirichlet boundary conditions
	\begin{align*}
	&\uo(0,t)=\exp(-t)\sin(t),\\
	&\uo(\pi,t)=\exp(-t)\sin(3\pi+t),\\
	&\ut(0,t)=\exp(-t)\sin(t),\\
	&\ut(\pi,t)=\exp(-t)\sin(3\pi+t)
	\end{align*}
	are concerned, \textcolor{blue}{respectively}, in which the source term and initial condition are suitably chosen such that the exact solution is
	\begin{equation*}
	\bs u=(\exp(-t)\sin(3x+t),\exp(-t)\sin(3x+t))^\trs.
	\end{equation*}	
\end{example}
The $L^2$ errors and numerical orders for Example \ref{ex4} are given in Tables \ref{table5} and \ref{table6}, indicating that the optimal error estimate results are also valid for mixed and Dirichlet boundary conditions.

\begin{table}[htp!]
	\caption{\label{table5} $L^2$ errors and  orders for Example \ref{ex4} with mixed boundary conditions and different $\theta$, $T=1$.} \centering
	\vspace{-0.4em}
	\bigskip
	\begin{lrbox}{\tablebox}
		\begin{tabular}{cccccccccc}
			\hline
			& &\multicolumn{2}{c}{$\theta = 0.8$}&
			&\multicolumn{2}{c}{$\theta = 1.0$}&&\multicolumn{2}{c}{$\theta = 1.2$}\\
			\cline{3-4}   \cline{6-7} \cline{9-10}
			&         {$N$}      & $L^2$ error& Order &~~~& $L^2$ error & Order & ~~~ &  $L^2$  error  & Order \\
			\hline
			\multirow{4}{*}{$P^0$}
			&$10$&   1.06  &  --   &&   7.37E-01  &  --   &&  7.73E-01  &  --  \\
			&$20$&   4.49E-01  & 1.23  &&   3.68E-01  & 1.00  &&  3.87E-01  & 1.00\\
			&$40$&   2.07E-01  & 1.12  &&   1.83E-01  & 1.01  &&  1.90E-01  & 1.03 \\
			&$80$&   1.00E-01  & 1.05  &&   9.11E-02  & 1.01  &&  9.57E-02  & 0.99 \\ \vspace{-0.8em}
			\\
			\multirow{4}{*}{$P^1$}
			&$10$&   2.62E-01  &  --   &&   1.93E-01  &  --   &&  2.55E-01  &  --  \\
			&$20$&   9.51E-02  & 1.47  &&   4.92E-02  & 1.97  &&  6.00E-02  & 2.09\\
			&$40$&   2.83E-02  & 1.75  &&   1.24E-02  & 1.99  &&  1.42E-02  & 2.08 \\
			&$80$&   7.36E-03  & 1.94  &&   3.11E-03  & 2.00  &&  3.45E-03  & 2.04 \\ \vspace{-0.8em}
			\\
			\multirow{4}{*}{$P^2$}
			&$10$&   2.78E-03  &  --   &&   2.63E-03  &  --   &&  3.31E-03  &  --  \\
			&$20$&   3.51E-04  & 2.98  &&   3.32E-04  & 2.99  &&  4.32E-04  & 2.94\\
			&$40$&   4.53E-05  & 2.96  &&   4.16E-05  & 3.00  &&  5.84E-05  & 2.89 \\
			&$80$&   5.75E-06  & 2.98  &&   5.20E-06  & 3.00  &&  7.64E-06  & 2.93 \\ \vspace{-0.8em}
			\\
			\multirow{4}{*}{$P^3$}
			&$10$&   4.90E-04  &  --    &&  1.52E-04  &  --   &&  3.17E-04  &  --  \\
			&$20$&   3.63E-05  & 3.75  &&   9.61E-06  & 3.99  &&  1.89E-05  & 4.07\\
			&$40$&   7.34E-06  & 3.95  &&   6.02E-07  & 4.00  &&  1.15E-06  & 4.03 \\
			&$80$&   1.46E-07  & 4.00  &&   3.77E-08  & 4.00  &&  7.13E-08  & 4.02 \\ \vspace{-0.8em}
			\\
			\multirow{4}{*}{$P^4$}
			&$10$&   6.65E-06  &  --    &&  5.28E-06  &  --   &&  8.34E-06  &  --  \\
			&$20$&   2.22E-07  & 4.90  &&   1.66E-07  & 4.99  &&  2.64E-07  & 4.98\\
			&$40$&   7.45E-09  & 4.90  &&   5.21E-09  & 5.00  &&  9.43E-09  & 4.81 \\
			&$80$&   2.41E-10  & 4.95  &&   1.63E-10  & 5.00  &&  3.19E-10  & 4.88 \\ \vspace{-1.2em}
			\\
			\hline
		\end{tabular}
	\end{lrbox}
	\scalebox{1.04}{\usebox{\tablebox}}
\end{table}

\vspace{-0.3em}

\begin{table}[htp!]
	\caption{\label{table6} $L^2$ errors and  orders for Example \ref{ex4} with Dirichlet boundary conditions and different $\theta$, $T=1$.} \centering
	\vspace{-0.4em}
	\bigskip
	\begin{lrbox}{\tablebox}
		\begin{tabular}{cccccccccc}
			\hline
			& &\multicolumn{2}{c}{$\theta = 0.8$}&
			&\multicolumn{2}{c}{$\theta = 1.0$}&&\multicolumn{2}{c}{$\theta = 1.2$}\\
			\cline{3-4}   \cline{6-7} \cline{9-10}
			&         {$N$}      & $L^2$ error& Order &~~~& $L^2$ error & Order & ~~~ &  $L^2$  error  & Order \\
			\hline
			\multirow{4}{*}{$P^0$}
			&$10$&   4.24E-01  &  --   &&   4.12E-01  &  --   &&  4.41E-01  &  --  \\
			&$20$&   1.74E-01  & 1.29  &&   1.90E-01  & 1.12  &&  2.28E-01  & 0.95\\
			&$40$&   7.90E-02  & 1.14  &&   8.58E-02  & 1.14  &&  1.02E-01  & 1.16 \\
			&$80$&   3.80E-02  & 1.06  &&   4.06E-02  & 1.08  &&  4.66E-02  & 1.13 \\ \vspace{-0.8em}
			\\
			\multirow{4}{*}{$P^1$}
			&$10$&   7.60E-02  &  --   &&   7.76E-02  &  --   &&  8.16E-02  &  --  \\
			&$20$&   1.48E-02  & 2.36  &&   1.19E-02  & 2.70  &&  1.17E-02  & 2.80\\
			&$40$&   3.44E-03  & 2.11  &&   2.40E-03  & 2.31  &&  2.22E-03  & 2.40 \\
			&$80$&   8.50E-04  & 2.02  &&   5.62E-04  & 2.09  &&  5.01E-04  & 2.15 \\ \vspace{-0.8em}
			\\
			\multirow{4}{*}{$P^2$}
			&$10$&   3.24E-03  &  --   &&   3.21E-03  &  --   &&  3.28E-03  &  --  \\
			&$20$&   3.38E-04  & 3.26  &&   3.80E-04  & 3.08  &&  4.86E-04  & 2.76\\
			&$40$&   4.01E-05  & 3.07  &&   4.38E-05  & 3.12  &&  5.97E-05  & 3.03 \\
			&$80$&   4.99E-06  & 3.01  &&   5.28E-06  & 3.05  &&  7.27E-06  & 3.04 \\ \vspace{-0.8em}
			\\
			\multirow{4}{*}{$P^3$}
			&$10$&   4.33E-04  &  --    &&  4.67E-04  &  --   &&  1.93E-04  &  --  \\
			&$20$&   1.93E-05  & 4.49  &&   1.62E-05  & 4.85  &&  9.88E-06  & 4.29\\
			&$40$&   1.02E-06  & 4.23  &&   7.17E-07  & 4.50  &&  5.76E-07  & 4.10 \\
			&$80$&   6.14E-08  & 4.06  &&   3.95E-08  & 4.18  &&  3.51E-08  & 4.04 \\ \vspace{-0.8em}
			\\
			\multirow{4}{*}{$P^4$}
			&$10$&   8.66E-06  &  --    &&  8.21E-06  &  --   &&  6.75E-06  &  --  \\
			&$20$&   2.06E-07  & 5.40  &&   2.33E-07  & 5.14  &&  2.59E-07  & 4.70\\
			&$40$&   5.87E-09  & 5.13  &&   6.03E-09  & 5.27  &&  8.63E-09  & 4.91 \\
			&$80$&   1.82E-10  & 5.01  &&   1.71E-10  & 5.14  &&  2.74E-10  & 4.98 \\ \vspace{-1.2em}
			\\
			\hline
		\end{tabular}
	\end{lrbox}
	\scalebox{1.04}{\usebox{\tablebox}}
\end{table}

\begin{example}\label{ex5}
Consider the following problem
\begin{align*}
	\cut+\bs f(\bs u)_x&=(\bs A(\bs u)\bs u_x)_x+\bs g,\quad (x,t)\in(0,2\pi)\times(0,T]\\
	\bs u(x,0)&=\bs u_0(x),\quad\quad \qquad ~~~~x\in (0,2\pi),
\end{align*}
with nonlinear diffusive terms and periodic boundary conditions, where $\bs u=(\uo,\ut)^\trs$, $\bs f(\bs u)=(\uo+\ut,\uo+\ut)^\trs$, $\bs A(\bs u)=diag(\uo^4,\ut^4)$. The source term and initial condition are chosen such that the exact solution is
\begin{equation*}
	\bs u=(\sin(x-t),\sin(x-t))^\trs.
	\end{equation*}	
\end{example}
The $L^2$ errors and numerical orders for Example \ref{ex5} are given in Table \ref{table7}, from which we can see that optimal error estimates also hold for problems with nonlinear diffusive coefficients.

\begin{table}[htp!]
	\caption{\label{table7} $L^2$ errors and  orders for Example \ref{ex5} with nonlinear diffusive terms and different $\theta$, $T=0.5$.} \centering
	\vspace{-0.4em}
	\bigskip
	\begin{lrbox}{\tablebox}
		\begin{tabular}{cccccccccc}
			\hline
			& &\multicolumn{2}{c}{$\theta = 0.8$}&
			&\multicolumn{2}{c}{$\theta = 1.0$}&&\multicolumn{2}{c}{$\theta = 1.2$}\\
			\cline{3-4}   \cline{6-7} \cline{9-10}
			&         {$N$}      & $L^2$ error& Order &~~~& $L^2$ error & Order & ~~~ &  $L^2$  error  & Order \\
			\hline
			\multirow{4}{*}{$P^0$}
			&$15$&   4.15E-01  &  --   &&   5.26E-01  &  --   &&  6.54E-01  &  --  \\
			&$30$&   2.10E-01  & 0.98  &&   2.76E-01  & 0.93  &&  3.50E-01  & 0.90\\
			&$45$&   1.41E-01  & 0.99  &&   1.87E-01  & 0.96  &&  2.39E-01  & 0.94 \\
			&$60$&   1.06E-01  & 0.99  &&   1.42E-01  & 0.97  &&  1.82E-01  & 0.95 \\ \vspace{-0.8em}
			\\
			\multirow{4}{*}{$P^1$}
			&$15$&   3.74E-02  &  --   &&   2.68E-02  &  --   &&  2.26E-02  &  --  \\
			&$30$&   9.68E-03  & 1.95  &&   6.70E-03  & 2.00  &&  5.59E-03  & 2.01\\
			&$45$&   4.32E-03  & 1.99  &&   2.98E-03  & 2.00  &&  2.48E-03  & 2.00 \\
			&$60$&   2.43E-03  & 2.00  &&   1.67E-03  & 2.00  &&  1.39E-03  & 2.00 \\ \vspace{-0.8em}
			\\
			\multirow{4}{*}{$P^2$}
			&$15$&   7.89E-04  &  --   &&   9.71E-04  &  --   &&  1.16E-03  &  --  \\
			&$30$&   9.22E-05  & 3.10  &&   1.17E-04  & 3.06  &&  1.45E-04  & 3.01\\
			&$45$&   2.69E-05  & 3.04  &&   3.41E-05  & 3.04  &&  4.25E-05  & 3.02 \\
			&$60$&   1.13E-05  & 3.02  &&   1.43E-05  & 3.02  &&  1.79E-05  & 3.02 \\ \vspace{-0.8em}
			\\
			\multirow{4}{*}{$P^3$}
			&$10$&   1.92E-04  &  --   &&   1.60E-04  &  --   &&  1.47E-04  &  --  \\
			&$15$&   3.83E-05  & 3.97  &&   2.80E-05  & 4.29  &&  2.44E-05  & 4.43\\
			&$25$&   4.83E-06  & 4.05  &&   3.30E-06  & 4.19  &&  2.76E-06  & 4.27 \\
			&$30$&   2.29E-06  & 4.09  &&   1.56E-06  & 4.12  &&  1.30E-06  & 4.12 \\ \vspace{-0.8em}
			\\
			\multirow{4}{*}{$P^4$}
			&$10$&   1.81E-05  &  --   &&   1.22E-05  &  --   &&  1.02E-05  &  --  \\
			&$15$&   1.05E-06  & 7.03  &&   1.12E-06  & 5.88  &&  1.22E-06  & 5.22\\
			&$25$&   4.43E-08  & 6.19  &&   6.38E-08  & 5.62  &&  8.33E-08  & 5.26 \\
			&$30$&   1.54E-08  & 5.81  &&   2.32E-08  & 5.55  &&  3.14E-08  & 5.34 \\ \vspace{-1.2em}
			\\
			\hline
		\end{tabular}
	\end{lrbox}
	\scalebox{1.04}{\usebox{\tablebox}}
\end{table}

\begin{example}\label{ex6}
In this example, consider the following fully nonlinear degenerate convection-diffusion Buckley-Leverett system with discontinuous initial data
	\begin{align*}
	\cut+\bs f(\bs u)_x&=(\bs A(\bs u)\bs u_x)_x+\bs g,\quad (x,t)\in(0,1)\times(0,T]\\
	\bs u(x,0)&=\bs u_0(x),\quad\quad \qquad ~~~~ x\in (0,1),
	\end{align*}
and Dirichlet boundary conditions
	\begin{align*}
	\uo(0,t)&=1,\quad\uo(1,t)=0,\\
	\ut(0,t)&=1,\quad\ut(1,t)=0,
	\end{align*}
where $\bs u=(\uo,\ut)^\trs,$ \textcolor{blue}{$\bs f(\bs u)=\Big(\frac{\ut^2}{\ut^2+(1-\ut)^2}, \frac{\uo^2}{\uo^2+(1-\uo)^2}\Big)^\trs$}, $\bs A(\bs u)=diag(0.01a(\uo),0.01a(\ut))$. The coefficient $a(w)$ and initial condition $u_1(x,0),u_2(x,0)$ are
	$$a(w)=
	\begin{cases}
	4w(1-w),&\text{$0 \le w \le 1$},\\
	0,&\text{otherwise},
	\end{cases}
	\hspace{1em}
	u_1(x,0)= u_2(x,0) =
	\begin{cases}
	1-3x,&\text{$0 \le x \le 1/3$},\\
	0,&\text{$1/3\le x\le 1$}.
	\end{cases}
	$$
\end{example}
\vspace{-0.5em}
\begin{figure}[!htbp]
	\centering
	\subfigure[$P^1$ with $\theta=1.0$, $N=160$, $T=0.2$]{
		\begin{minipage}[t]{0.5\linewidth}
			\centering
			\includegraphics[width=\linewidth]{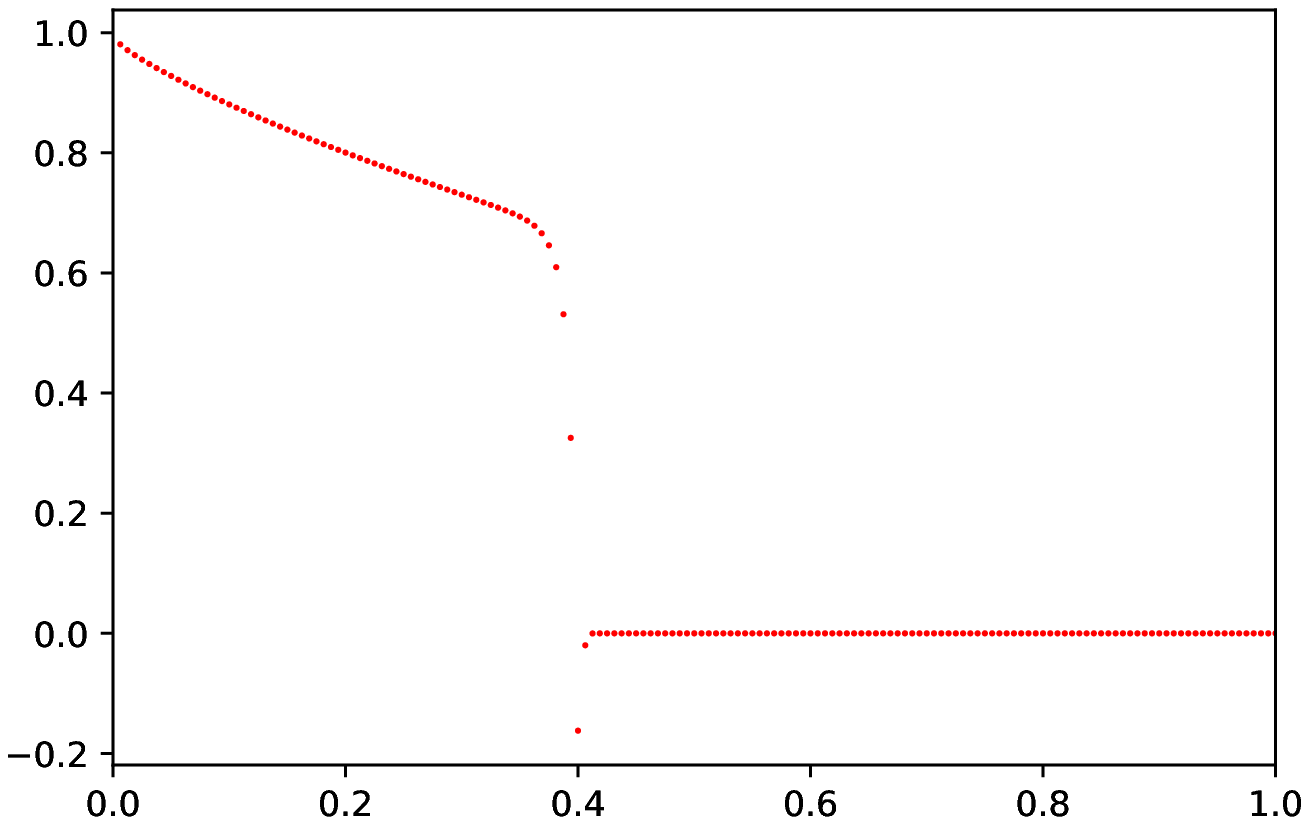}
		\end{minipage}%
	}%
	\subfigure[$P^1$ with $\theta=1.3$, $N=160$, $T=0.2$]{
		\begin{minipage}[t]{0.5\linewidth}
			\centering
			\includegraphics[width=\linewidth]{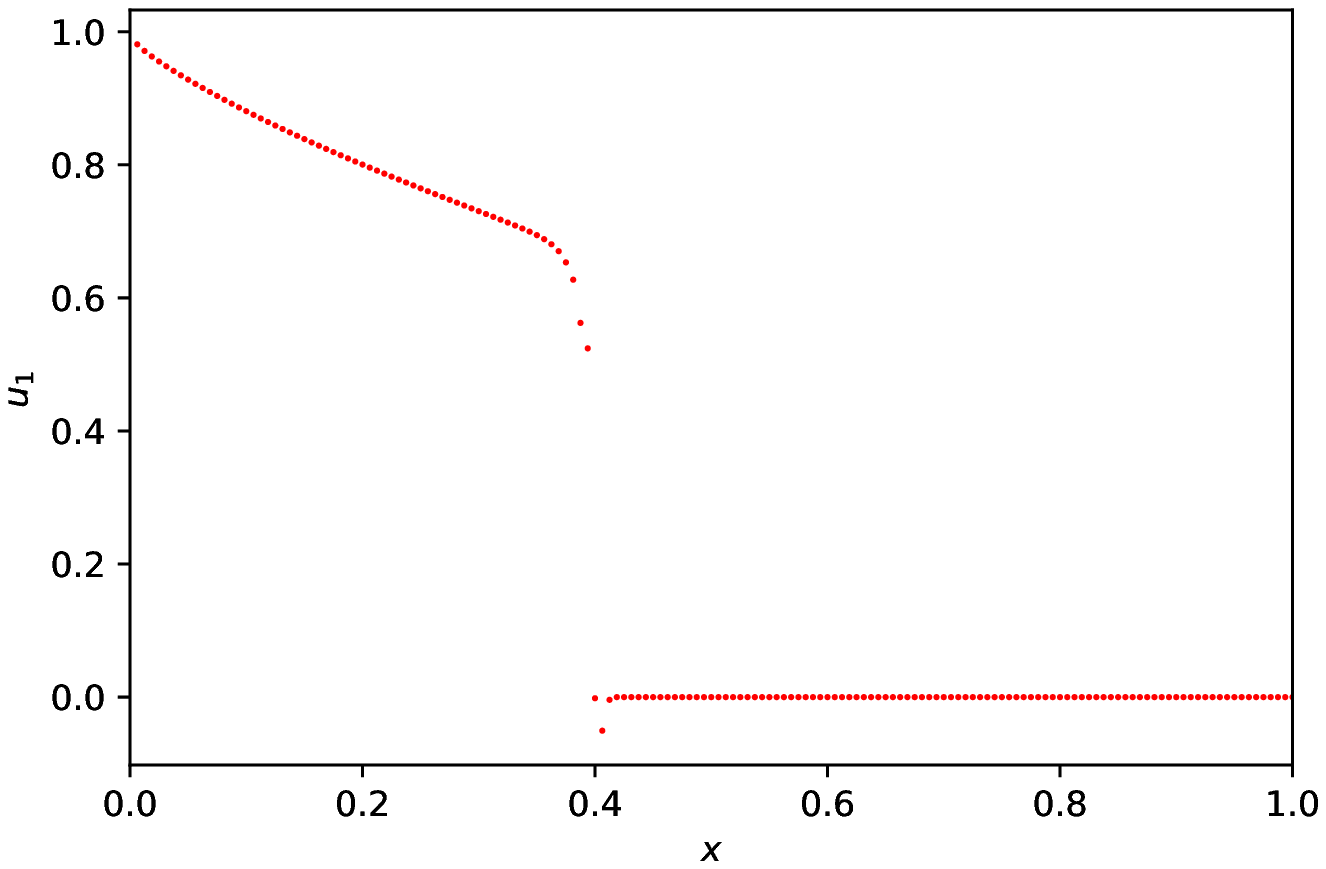}
		\end{minipage}%
	}%
	
	\subfigure[$P^2$ with $\theta=1.0$, $N=160$, $T=0.1$]{
		\begin{minipage}[t]{0.5\linewidth}
			\centering
			\includegraphics[width=\linewidth]{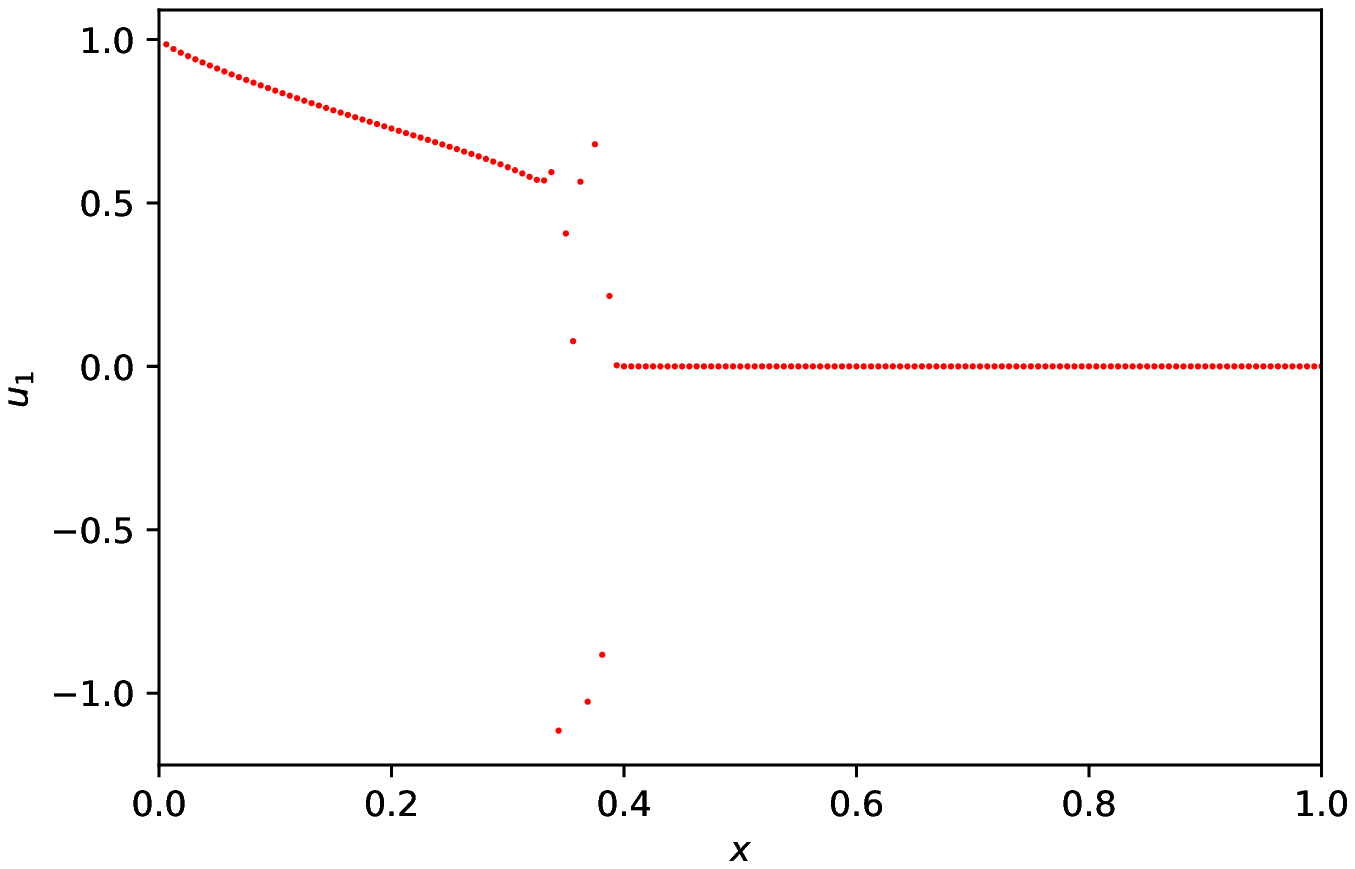}
		\end{minipage}
	}%
	\subfigure[$P^2$ with $\theta=0.8$, $N=160$, $T=0.1$]{
		\begin{minipage}[t]{0.5\linewidth}
			\centering
			\includegraphics[width=\linewidth]{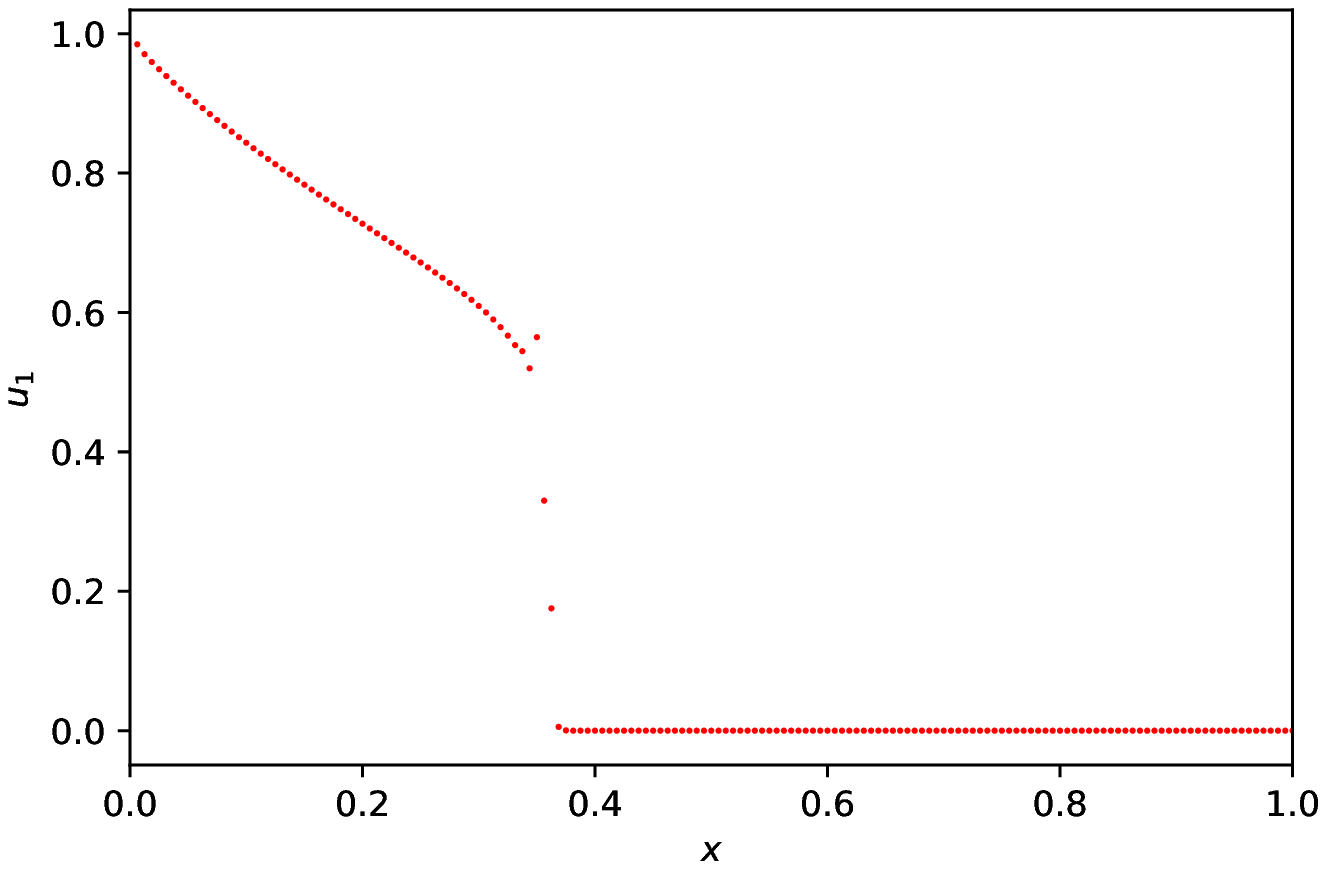}
		\end{minipage}
	}%
	
	\centering
	\caption{LDG solutions to Example \ref{ex6} with a degenerate nonlinear diffusion term using $P^1, P^2$ polynomials}\label{figure2}
\end{figure}

The LDG solutions to Example \ref{ex6} are shown in Figure \ref{figure2}, in which $P^1, P^2$ polynomials and different weights $\theta$ are considered. From the figure, we can see that, for $P^1$ polynomials, the LDG solution with $\theta = 1.3$ is better than that for the traditional upwind flux with $\theta = 1$ and that, for $P^2$ polynomials, the LDG solution with $\theta = 0.8$ is better than that for the traditional upwind flux with $\theta = 1$, as far as the discontinuity transitions are concerned. This illustrates that it is beneficial to use the LDG scheme with generalized numerical fluxes solving fully nonlinear degenerate convection-diffusion systems, producing a satisfactory approximation even without the aid of any limiter.
Besides, it seems that for even (odd) values of $k$, smaller (larger) $\theta$ would lead to better approximations, which agrees with the case of smooth solutions when the magnitude of errors is considered, e.g. \cite{CMZ2017}.

\section{Concluding Remarks}\label{sc5}
In this paper, we carry out optimal error analysis of the LDG method using generalized numerical fluxes for  nonlinear convection-diffusion systems. For both \textcolor{blue}{symmetric positive definite} and symmetrizable flux Jacobian matrices, we derive optimal error estimates by analyzing suitable GGR projections and using the local characteristic decomposition of the flux Jacobian. A series of numerical experiments are given to validate the theoretical results. In future work, we will concentrate on the LDG method for multi-dimensional nonlinear convection-diffusion systems.

\Acknowledgements{The authors thank referees for their valuable suggestions that result in the improvement of the paper. This work was supported by the National Natural Science Foundation of China (Grant No. 11971132, 11971131) and the Natural Science Foundation of Heilongjiang Province, China (Grant No. YQ2021A002). Additional support was provided by Guangdong Basic and Applied Basic Research Foundation (Grant No. 2020B1515310006)}


\end{document}